\theoremstyle{definition}
\theoremstyle{plain}
\newtheorem{theorem}{Theorem}[section]
\newtheorem{lemma}[theorem]{Lemma}
\newtheorem{prop}[theorem]{Proposition}
\newtheorem{cor}[theorem]{Corollary}
\theoremstyle{definition}
\newtheorem{definition}[theorem]{Definition}
\newtheorem{algorithm}[theorem]{Algorithm}
\def\@fnsymbol#1{\ensuremath{\ifcase#1\or *\or \dagger
   \or \sharp \or \flat
   \or \circledast \or \diamond
   \or \ddagger \else\@ctrerr\fi}}
\newcommand{\NP}{\mathrm{NP}}
\newcommand{\PPAD}{\mathrm{PPAD}}
\newcommand{\coNP}{\mathrm{coNP}}
\def\R{\mathbb{R}}
\def\id{\mathrm{id}}
\let\Phi\phi
\def\toOver#1{\xrightarrow{#1}}
\def\toPhi{\toOver{\Phi}}
\DeclareMathOperator{\LCP}{LCP}
\DeclareMathOperator{\PLCP}{P-LCP}
\def\subcube#1#2{{#1}[{#2}]}
\begin{document}

\title{Pivoting in Linear Complementarity:\\Two Polynomial-Time Cases}

\author{%
Jan Foniok%
	\thanks{ETH Zurich, Institute for Operations Research, 8092 Zurich, Switzerland}
	\thanks{\texttt{foniok@math.ethz.ch}}
\and
Komei Fukuda%
	\footnotemark[1]
	\thanks{ETH Zurich, Institute of Theoretical Computer Science, 8092 Zurich, Switzerland}
	\thanks{\texttt{fukuda@ifor.math.ethz.ch}}
\and
Bernd G{\"a}rtner%
	\footnotemark[3]
	\thanks{\texttt{gaertner@inf.ethz.ch}}
\and
Hans-Jakob L\"uthi%
	\footnotemark[1]
	\thanks{\texttt{luethi@ifor.math.ethz.ch}}
}

\maketitle

\begin{abstract}
  We study the behavior of simple
  principal pivoting methods for the P-matrix linear complementarity
  problem (P-LCP).  We solve an open problem of Morris by
  showing that Murty's least-index pivot rule (under any fixed index
  order) leads to a quadratic number of iterations on Morris's highly
  cyclic P-LCP examples. We then show that on K-matrix LCP instances,
  \emph{all} pivot rules require only a linear number of iterations. 
  As the main tool, we employ \emph{unique-sink orientations} of cubes, a
  useful combinatorial abstraction of the P-LCP.
\end{abstract}

\section{Introduction}

The third author of this paper still vividly recollects his visit to
Victor Klee at the University of Washington (Seattle) in August 2000.

\begin{quotation}
  We had a memorable drive in Vic's car from the hotel to the math
  department. Vic skipped all the small talk and immediately asked:
  ``Do you think that the simplex method is polynomial?'' I still
  remember how awkward I felt to be asked this question by the very
  person who had provided the first and seminal insights into it.
  My (then as now quite irrelevant) opinion shall remain between Vic
  and me forever.
\end{quotation}

The seminal work referred to above is of course the 1972 paper of Klee
and Minty entitled ``How good is the simplex algorithm?''
\cite{KleMin:How-good}. It deals with the number of iterations that
may be required in the worst case to solve a linear program (LP) by
Dantzig's \emph{simplex method} \cite{Dan:Linear}, which
was at that time the only available practical method. Klee and Minty
exhibited a family of LPs (now known as the \emph{Klee-Minty cubes}) for
which the number of iterations is \emph{exponential} in the number of
variables and constraints.

Dantzig's specific \emph{pivot rule} for selecting the next step is
just one of many conceivable rules.  The question left open
by the work of Klee and Minty is whether there is some \emph{other}
rule that provably leads to a small number of iterations. This
question is as open today as it was in 1972.

Nowadays, the simplex method is no longer the only available method to
solve LPs. In particular, there \emph{are} proven polynomial-time
algorithms for solving LPs---Khachiyan's \emph{ellipsoid method}
\cite{Kha} and  Karmarkar's \emph{interior point method}
\cite{Kar}---that are based on techniques developed originally for
nonlinear optimization. It is still unknown, though, whether there is
a \emph{strongly} polynomial-time algorithm for solving LPs, that is, an algorithm
for which the number of arithmetic operations does not depend on the
bit lengths of the input numbers. 

\paragraph{The P-matrix linear complementarity problem.}
In this paper we are concerned with pivoting methods for the
\emph{P-matrix linear complementarity problem} (P-LCP), a prominent
problem for which neither polynomial-time algorithms nor hardness
results are available. 

In general, an LCP is given by a matrix $M\in\R^{n\times n}$ and a
vector $q\in\R^n$, and the problem is to find vectors $w,z\in\R^n$
such that
\begin{equation}\label{eq:lcpdef}
w-Mz = q, \quad w,z\geq 0, \quad w^Tz = 0.
\end{equation}
It is $\NP$-complete to decide whether such vectors exist
\cite{Chung:Hardness,KojMegNom:A-unified}. 
However, in a P-LCP, $M$~is a P-matrix (meaning that
all principal minors are positive), and then there are unique  
solution vectors $w,z$ for every right-hand side~$q$~\cite{STW}.
The problem of efficiently finding them is unsolved, though.

Megiddo has shown that the P-LCP is unlikely to be NP-hard. For
this, he considers the following (more general) variant: given
$M$ and $q$, either find $w,z$ that satisfy (\ref{eq:lcpdef}), or
exhibit a nonpositive principal minor of $M$. NP-hardness of the latter
variant would imply that $\NP=\coNP$~\cite{Meg:A-Note-on-the-Complexity}.

There is a different notion of hardness that might apply to the P-LCP as
a member of the complexity class~$\PPAD$~\cite{PPAD}.  This class has
complete problems, and no polynomial-time algorithm is known for any of
them. It has recently been shown that the famous problem of finding a
Nash equilibrium in a bimatrix game is $\PPAD$-complete~\cite{Nash}.
Some researchers consider this to be an explanation why (despite many
efforts) no polynomial-time algorithm has been found so far.
Incidentally, this is the \emph{second} problem for which Megiddo
showed in his technical report~\cite{Meg:A-Note-on-the-Complexity}
that it is unlikely to be NP-hard. However, it is currently not known whether
the P-LCP is also PPAD-complete.

There are various algorithms for solving P-LCPs, among them the
classic method by Lemke \cite{Lemke:Recent-Results} as well as
interior point approaches
\cite{KojMegYe:An-interior,KojMegNom:A-unified}. Still, there is no
known polynomial-time algorithm for P-LCPs. For example, the
complexity of interior point algorithms depends linearly on a matrix
parameter $\kappa$ \cite{KojMegNom:A-unified} that is not bounded for
P-matrices.

In this work, we focus on combinatorial methods for P-LCPs and in
particular on \emph{simple principal pivoting methods} that share
their essential idea with the simplex method.

\paragraph{Simple principal pivoting methods.}
Let $B\subseteq\{1,2,\ldots,n\}$, and let $A_B$ be the $n\times n$ matrix whose $i$th
column is the $i$th column of~$-M$ if $i\in B$, and the $i$th column of
the $n\times n$ identity matrix~$I_n$ otherwise.  If $M$ is a P-matrix,
then $A_B$ is invertible for every set $B$. We call $B$ a \emph{basis}.
If $A_B^{-1}q\geq 0$, we have discovered the solution: let 
\begin{equation}\label{eq:lcpsol}
w_i := \begin{cases}
0 & \text{if $i\in B$} \\
(A_B^{-1}q)_i & \text{if $i\notin B$}
\end{cases}, \qquad 
 z_i := \begin{cases}
(A_B^{-1}q)_i & \text{if $i\in B$}\\
0 & \text{if $i\notin B$}
\end{cases}.
\end{equation}

If on the other hand $A_B^{-1}q\ngeq 0$, then $w$ and $z$ defined above
satisfy $w-Mz=q$ and $w^Tz=0$,
but $w,z\geq 0$ fails. In \emph{principal pivoting}, one tries to
improve the situation by replacing the basis $B$ with the symmetric
difference $B\oplus C$, where $C$ is some nonempty subset of the ``bad
indices'' $\{i:(A_B^{-1}q)_i<0\}$. The greedy choice is to let $C$ 
be the set of all bad indices in every step.  For some P-LCPs, however, this algorithm
cycles and never terminates~\cite{Mur:Linear}.\footnote{For $n=3$,
one such example is the P-LCP (\ref{eq:MorrisMq}) whose digraph model (see
next paragraph) appears in Figure \ref{fig:uso}. Cycling occurs for any 
of the three start bases just below the top vertex in Figure~\ref{fig:uso}.} 
In \emph{simple principal pivoting}, $C$
is of size one, and a \emph{pivot rule} is employed to select the bad
index. Simple principal pivoting methods are sometimes called
\emph{Bard-type} methods and were first studied by
Zoutendijk~\cite{Zou:Methods} and Bard~\cite{Bar:An-eclectic}.  For a
detailed exposition on simple principal pivoting methods 
see, for instance, \cite[Section~4.2]{CotPanSto:LCP}
or~\cite[Chapter~4]{Mur:Linear}.

\paragraph{The digraph model and unique sink orientations.}
There is a natural digraph model behind principal pivoting, first
studied by Stickney and Watson \cite{StiWat:Digraph-models}. The graph's
vertices are all bases~$B$, with a directed edge going from $B$ to
$B'$ if $B\oplus B'=\{i\}$ and $(A_B^{-1}q)_i<0$.\footnote{We require
  $q$ to be nondegenerate, meaning that $(A_B^{-1}q)_i\neq 0$ for all
  $B$ and $i$. This is no serious restriction since a nondegenerate
  problem can always be obtained from a degenerate one by a symbolic
  perturbation of $q$.} The underlying undirected graph is the graph
of the $n$-dimensional cube. A principal pivot step considers the cube
that is spanned by the outgoing edges at the current vertex, and it
jumps to the antipodal vertex of some subcube of it. A simple
principal pivot step can be interpreted as the traversal of an
outgoing edge, and this is the analogy with the simplex method for
linear programming.

The above digraph has the following specific property: every nonempty
subcube (including the whole cube) has a unique sink, and the global
sink corresponds to the solution of the P-LCP
\cite{StiWat:Digraph-models}. In the terminology of Szab{\'o} and
Welzl \cite{SzaWel:UniSink}, we are dealing with a \emph{unique-sink
orientation} (USO). 

The goal of this paper is to deepen the understanding of the USO
description of simple principal pivoting methods for the P-LCP. On a
general level, we want to understand whether and how algebraic
properties of a P-LCP (which are well studied) translate to
combinatorial properties of the corresponding USO (which are much less studied), and
what the algorithmic implications of such translations are. The
principal motivation behind this research is the continuing quest for
a polynomial-time P-LCP algorithm. In this paper, the concept of a USO
serves as the main tool to obtain two more positive results for simple
principal pivoting. 

We believe that this combinatorial approach has
some untapped potential for the theory of (simple)
principal pivoting methods for LCPs.

\paragraph{The (randomized) method of Murty and the Morris orientations.} 
The simple principal pivoting method of Murty (also known as the
\emph{least-index} rule) \cite{Mur:Note-on-a-Bard-type} works for all
P-LCP instances, but it may take exponentially many iterations in the
worst case \cite{Murty:slow}. As a possible remedy, researchers have
considered \emph{randomized} methods. The two most prominent ones are
{\sc RandomizedMurty} (which is just the least-index rule, after
randomly reshuffling the indices at the beginning), and {\sc
RandomEdge} (which performs a purely random walk in the USO). 
However, Morris found a family of P-LCP instances (we call
their underlying digraphs the \emph{Morris orientations}) on which
{\sc RandomEdge} spends a long time running in cycles and thus
performs much worse than even the exhaustive search
algorithm~\cite{Mor:Randomized-pivot}.

For {\sc RandomizedMurty} there are as yet no such
negative results on P-LCP instances. In particular, on Murty's
worst-case example, this algorithm takes an expected \emph{linear}
number of steps \cite{FukNam:On-extremal-behaviors}; the expected number
of steps becomes
quadratic if we allow arbitrary start vertices~\cite{BP}.

On general P-LCP instances, the expected number of iterations taken
by {\sc RandomizedMurty} is \emph{subexponential} if the underlying digraph is
acyclic~\cite{Gar:The-random-facet}, but it is unknown whether this also
holds under the presence of directed cycles (as they appear in the Morris
orientations, for example).

We prove that not only the randomized variant, but actually \emph{any}
variant of the least-index rule obtained by some initial reshuffling
of indices leads to a \emph{quadratic} number of iterations on the Morris
orientations. In particular, this ``kills'' another family of potentially bad
instances for the {\sc RandomizedMurty} rule.

\paragraph{K-matrix linear complementarity problems.}
A \emph{K-matrix} is a P-matrix so that all off-diagonal elements are non-positive.
LCPs with K-matrices (K-LCPs) appear for example in free boundary problems
\cite[Section~5.1]{CotPanSto:LCP} and in American put option pricing
\cite{options1,options2}.
It is known that every K-LCP instance can be solved
in polynomial time (even by a simple principal
pivoting method~\cite{Cha:A-special,Sai:A-note}, see
also~\cite[Section~4.7]{CotPanSto:LCP}), but we prove something stronger:
\emph{every} simple principal pivoting method takes only a linear number
of iterations. We obtain this result by showing that in
K-LCP-induced USOs, \emph{all} directed paths are short. Our approach
is to extract combinatorial structure from the algebraic properties of
K-matrices. Subsequently, we use the distilled combinatorial
information to get our structural results. 

\paragraph{LCPs with sufficient matrices and the criss-cross method.}
Let us step back and take a broader view that brings together
LP, the P-LCP, pivoting methods, and computational complexity: linear
complementarity problems with \emph{sufficient} matrices generalize
both LP and the P-LCP while still being amenable to pivoting approaches.
The \emph{criss-cross
  method}~\cite{FukNamTam:EP-theorems,FukTer:Linear} may be considered
an extension of Murty's algorithm.  In fact, the results
of~\cite{FukNamTam:EP-theorems} make LCPs with sufficient matrices the
largest known class of LCPs for which Megiddo's techniques
of~\cite{Meg:A-Note-on-the-Complexity} still apply, so that
NP-hardness is unlikely. Determining the complexity of sufficient
matrix LCP remains a tough challenge for the future.

\section{Unique-sink orientations}
\label{sec:uso}
Now we formally introduce the digraph model behind P-LCPs and show
some of its basic properties. The model
was first described by Stickney and Watson in 1978
\cite{StiWat:Digraph-models}.

We use the following notation.  Let
$[n]:=\{1,2,\ldots,n\}$.  For a bit vector $v\in\{0,1\}^n$ and
$I\subseteq [n]$, let $v\oplus I$ be the element of~$\{0,1\}^n$
defined by
\[(v\oplus I)_j :=
	\begin{cases}
	1-v_j&\text{if $j\in I$,}\\
	v_j&\text{if $j\notin I$.}
	\end{cases}
\]
This means that $v\oplus I$ is obtained from $v$ by flipping all
entries with coordinates in~$I$. Instead of $v\oplus\{i\}$ we simply
write $v\oplus i$.

Under this notation, the (undirected) \emph{$n$-cube} is the graph
$G=(V,E)$ with
\[
V := \{0,1\}^n, \quad
E := \{\{v, v\oplus i\} : v\in V,\ i\in[n]\}.
\] 

A \emph{subcube} of~$G$ is a subgraph $G'=(V',E')$ of~$G$ where
$V'=\{v\oplus I:I\subseteq C\}$ for some vertex $v$ and some set
$C\subseteq[n]$, and $E'=E\cap\binom{V'}{2}$.

\begin{definition}
  \label{def:uso}
  Let $\Phi$ be an orientation of the $n$-cube (a digraph with
  underlying undirected graph $G$). We call $\Phi$ a \emph{unique-sink
    orientation} (USO) if every nonempty subcube has a unique sink in
  $\Phi$.
\end{definition}

Figure~\ref{fig:uso} depicts a USO of the 3-cube.  The conditions in
Definition \ref{def:uso} require the orientation to have a unique
global sink, but in addition, all proper nonempty subcubes must have
unique sinks as well. In Figure~\ref{fig:uso} there are six
2-dimensional subcubes, twelve 1-dimensional subcubes (edges) and
eight 0-dimensional subcubes (vertices). For edges and vertices, the
unique-sink condition is trivial. The figure also shows that USOs may
contain directed cycles.

\begin{figure}[!ht]
\begin{center}
\includegraphics{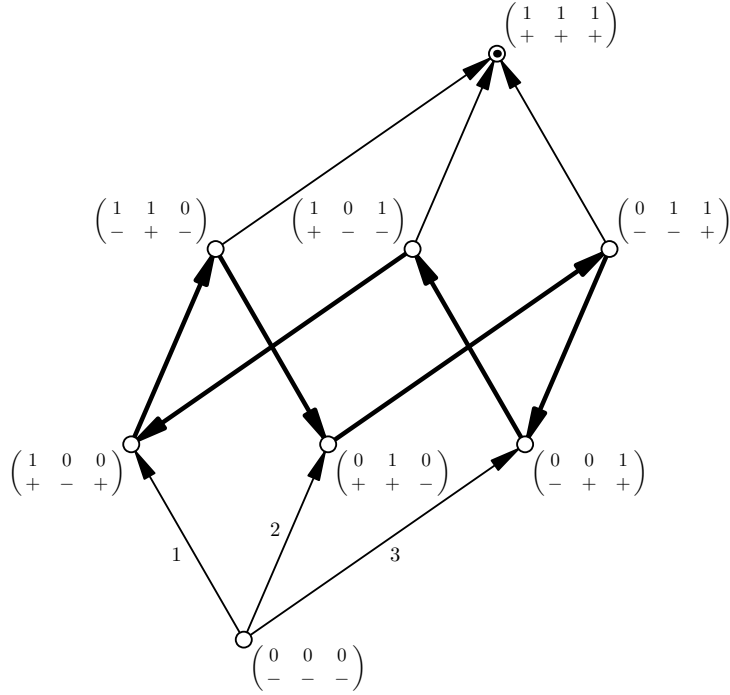}
\end{center}
\caption{A USO of the 3-cube; for each vertex, the orientation of
  incident edges is encoded with a sign vector in $\{-,+\}^n$
  ($-$~for an outgoing edge; $+$~for an incoming edge). The unique global
  sink and a directed cycle are highlighted.
  \label{fig:uso}}
\end{figure}

If $V'$ is the vertex set of a subcube (as defined above),
then the directed subgraph of~$\phi$ induced by~$V'$ is denoted
by~$\subcube{\phi}{V'}$.

It is convenient to view the orientation of the $n$-cube~$G$ as a mapping $\Phi:
\{0,1\}^n\to\{-,+\}^n$, where $\Phi(v)_i = -$ if the edge between $v$
and~$v\oplus i$ is oriented towards $v\oplus i$, and $\Phi(v)_i = +$
if it is oriented towards~$v$.  To encode a vertex~$v$ along with the orientations of
its incident edges, we can then use a \emph{configuration} of $n$~bits and
$n$~signs, where the $i$th sign is $\Phi(v)_i$ (see Figure~\ref{fig:uso}).
If an orientation $\Phi$ of $G$ contains the directed edge $(v, v\oplus
i)$, we write $v\toPhi v\oplus i$, or simply $v\to v\oplus i$
if $\Phi$~is clear from the context.

Let $\Phi$ be an orientation of the $n$-cube and let $F\subseteq[n]$. Then
$\Phi^{(F)}$~is the orientation of the $n$-cube obtained by reversing
all edges in coordinates contained in~$F$; formally
\[v \toOver{\Phi^{(F)}} v\oplus i \quad :\Leftrightarrow \quad
	\begin{cases}
	v \toPhi v\oplus i &\text{if $i\notin F$},\\
	v\oplus i \toPhi v &\text{if $i\in F$}.
	\end{cases}
\]

\begin{prop}[\cite{SzaWel:UniSink}]
If $\Phi$~is a USO, then $\Phi^{(F)}$ is also a USO for an arbitrary
subset~$F\subseteq[n]$.
\qed
\end{prop}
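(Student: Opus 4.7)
The plan is to induct on $|F|$, reducing everything to the case of flipping a single coordinate. If $F = F' \cup \{k\}$ with $k \notin F'$, then directly from the definition $\Phi^{(F)} = (\Phi^{(F')})^{(\{k\})}$, so once the single-coordinate case is settled, the inductive step is immediate and the base case $F=\emptyset$ is trivial. The heart of the argument is therefore the single-coordinate lemma: if $\Phi$ is a USO of the $n$-cube and $k\in[n]$, then $\Phi^{(\{k\})}$ is also a USO.

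To prove this, I fix an arbitrary nonempty subcube $V'$ with free coordinate set $C\subseteq[n]$ and show that $\subcube{\Phi^{(\{k\})}}{V'}$ has a unique sink. If $k\notin C$, no edge internal to $V'$ is reversed, so $\subcube{\Phi^{(\{k\})}}{V'} = \subcube{\Phi}{V'}$, which has a unique sink by the USO hypothesis on $\Phi$. The interesting case is $k\in C$; here I would partition $V'$ along coordinate $k$ into two $(|C|-1)$-dimensional subcubes $V_0$ and $V_1$ of the ambient $n$-cube.

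By the USO property of $\Phi$, each of $V_0, V_1$ has a unique sink, say $s_0$ and $s_1$, and $V'$ itself has a unique sink $s$. Since a sink of $V'$ is automatically a sink of the half containing it, $s\in\{s_0,s_1\}$. The candidate not equal to $s$ must have its $k$-edge pointing out of its own half (otherwise it would be a second sink of $V'$), while $s$ has its $k$-edge pointing inward. Flipping coordinate $k$ interchanges these two situations: the former non-sink candidate becomes a sink of $V'$ under $\Phi^{(\{k\})}$, while $s$ ceases to be one. Moreover, since $V_0$ and $V_1$ contain no $k$-edges at all, $s_0$ and $s_1$ remain the unique sinks of their respective halves under $\Phi^{(\{k\})}$, so any sink of $V'$ under $\Phi^{(\{k\})}$ must lie in $\{s_0,s_1\}$, and exactly one of them qualifies.

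The main obstacle, such as it is, is simultaneous book-keeping: one must ensure that no sink is lost and no extra sink is created after the flip. The partition argument handles both in one stroke by confining the possible sinks of $V'$ to the two-element set $\{s_0,s_1\}$ and then showing that the flip exchanges which of the two is the global sink.
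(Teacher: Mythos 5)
Your proof is correct and follows essentially the route the paper itself indicates (the paper only sketches it): induction on $|F|$ reduces everything to a single reversed coordinate, and the halving argument along coordinate $k$ cleanly shows that the unique sink of each subcube migrates from one half's sink to the other's without creating or losing any. No gaps.
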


This proposition can be proved by induction on the size of~$F$; it
essentially suffices to show that under $\Phi^{(\{i\})}$, the whole cube
still has a unique sink.

Now we can derive the
\emph{unique-completion property}.  Its meaning is that we can prescribe
the bits for some coordinates and the signs for the remaining coordinates,
and in a USO there will always be a unique vertex that satisfies the
prescription.  In particular, it follows that any USO (and any of its
subcubes) also has a \emph{unique source}.

In fact, the unique-completion property characterizes unique-sink
orientations.

\begin{lemma}
\label{lem:ucp}
An orientation~$\Phi$ of the $n$-cube~$G$ is a
unique-sink orientation if and only if for every partition $[n]=A\cup B$
and every pair of maps $\alpha:A\to\{0,1\}$, $\beta:B\to\{-,+\}$
there exists a unique vertex~$v$ such that
\begin{subequations}
\label{eq:ucp}
\begin{alignat}{2}
v_i &= \alpha(i) &\quad&\text{for all $i\in A$,}\\
\Phi(v)_i &= \beta(i) &&\text{for all $i\in B$.}
\end{alignat}
\end{subequations}
\end{lemma}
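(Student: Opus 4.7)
The plan is to prove both directions using the previous proposition, which tells us that applying a coordinate flip to a USO yields another USO. The key observation is that specifying a sink condition in a subcube is just a special case of specifying signs on exactly the coordinates that lie \emph{within} that subcube.

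For the forward direction, suppose $\Phi$ is a USO and fix a partition $[n]=A\cup B$ together with maps $\alpha,\beta$. First I would note that the set $V' := \{v\in\{0,1\}^n : v_i = \alpha(i) \text{ for all } i\in A\}$ is the vertex set of a subcube of dimension $|B|$: the coordinates in $A$ are frozen, while the coordinates in $B$ vary freely. Within this subcube, the only relevant incident edges of a vertex $v\in V'$ are those in directions $i\in B$, since edges in directions $i\in A$ leave $V'$. Now let $F := \{i\in B : \beta(i) = -\}$ and consider $\Phi^{(F)}$; by construction, a vertex $v\in V'$ satisfies $\Phi(v)_i = \beta(i)$ for all $i\in B$ if and only if $\Phi^{(F)}(v)_i = +$ for all $i\in B$, i.e., $v$ is the sink of $\subcube{\Phi^{(F)}}{V'}$. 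By the previous proposition $\Phi^{(F)}$ is a USO, so this sink exists and is unique, giving the desired unique~$v$.

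For the backward direction, suppose the unique-completion property holds and let $V'$ be the vertex set of an arbitrary nonempty subcube, determined by some $A\subseteq[n]$ and some $\alpha:A\to\{0,1\}$. Setting $B := [n]\setminus A$ and $\beta(i) := +$ for all $i\in B$, the unique-completion property supplies a unique $v\in V'$ with $\Phi(v)_i = +$ for every $i\in B$; this is exactly the unique sink of the subcube in $\Phi$. Hence every nonempty subcube has a unique sink, so $\Phi$ is a USO.

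The only real obstacle is keeping the bookkeeping straight: one must verify that only the $B$-coordinates matter for the sink condition inside $V'$, and that flipping exactly the directions in $F=\{i\in B:\beta(i)=-\}$ converts the prescription $\Phi(v)|_B = \beta$ into the pure sink condition $\Phi^{(F)}(v)|_B = (+,\dots,+)$. Once this correspondence is set up cleanly, both directions follow immediately from the invariance of the USO property under coordinate reversal.
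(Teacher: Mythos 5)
Your proof is correct and follows essentially the same route as the paper's: both directions reduce to the unique sink of the subcube $\subcube{\Phi^{(F)}}{V'}$ with $F=\{i\in B:\beta(i)=-\}$, using the invariance of the USO property under coordinate reversal. You merely spell out the bookkeeping (why only $B$-coordinates matter and why flipping $F$ turns the sign prescription into a sink condition) a bit more explicitly than the paper does.
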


\begin{proof}
  First suppose that $\Phi$~is a USO.  Let $U:=\{u: u_i=\alpha(i) \text{
  for all $i\in A$}\}$ and consider the subcube~$\subcube{\phi}{U}$.
  Let $F:=\{i\in B: \beta(i)=-\}$. As we have just noted, $\Phi^{(F)}$~is
  a unique-sink orientation, and thus the subcube~$\subcube{\phi}{U}$ has a
  unique sink~$v$ with respect to~$\Phi^{(F)}$. This vertex~$v$ is the
  only vertex that satisfies~(\ref{eq:ucp}a) and (\ref{eq:ucp}b).

  Conversely, let $\Phi$ be an orientation of the $n$-cube which
  satisfies the unique-completion property and let $U:=\{u\oplus
  I: I\subseteq C\}$. Set $B:=C$ and $A:=[n]\setminus C$, and let
  $\alpha(i):=u_i$ for $i\in A$ and $\beta(i):=+$ for $i\in B$. Then
  the unique vertex~$v$ satisfying~(\ref{eq:ucp}a) and (\ref{eq:ucp}b)
  is the unique sink of the subcube~$\subcube{\phi}{U}$.
\end{proof}

\paragraph{P-LCP-induced USOs.} We now formally define the USO that is
induced by a nondegenerate P-LCP instance $\PLCP(M,q)$
\cite{StiWat:Digraph-models}.
For $v\in\{0,1\}^n$, let $B(v):=\{j\in[n] : v_j=1\}$ be the canonical
``set representation'' of~$v$. Then the unique-sink orientation $\Phi$ induced
by $\PLCP(M,q)$ is
  \begin{equation}
    \label{eq:usodef}
  v \toPhi v \oplus i  \quad :\Leftrightarrow \quad
  (A_{B(v)}^{-1}q)_i < 0.
  \end{equation}
  Recall that $A_B$ is the (invertible) $n\times n$ matrix whose $i$th
  column is the $i$th column of~$-M$ if $i\in B$, and the $i$th column
  of the $n\times n$ identity matrix~$I_n$ otherwise.

In this way, we have reduced the P-LCP to the problem of finding
the sink in an implicitly given USO. Access to the USO is gained
through a \emph{vertex evaluation oracle} that for a given vertex
$v\in\{0,1\}^n$ returns the orientations $\Phi(v)\in\{-,+\}^n$ of all
incident edges.  In this setting, the aim is to find the sink of the
USO so that the number of queries to the oracle would be bounded by
a polynomial in the dimension~$n$. In the case of a P-LCP-induced
USO, a (strongly) polynomial-time implementation of the vertex
evaluation oracle is immediate from~\eqref{eq:usodef}.  From the
sink of~$\Phi$, we can reconstruct solution vectors $w$ and~$z$ as
in~\eqref{eq:lcpsol}. The fact that the $n$-cube orientation defined
in this way is indeed a unique-sink orientation was proved by Stickney
and Watson~\cite{StiWat:Digraph-models}.

Not every USO is P-LCP-induced; for $n=3$, the P-LCP-induced USOs are
characterized in~\cite{StiWat:Digraph-models}, but for $n\geq 4$,
we have only a necessary condition. This condition was originally
proved by Holt and Klee for polytope graphs oriented by linear
functions~\cite{HolKle:A-proof}.

\begin{theorem}[\cite{GarMorRus:Unique}]
  Every P-LCP-induced USO of the $n$-cube satisfies the
  \emph{Holt-Klee condition}, meaning that there are $n$ directed
  paths from the source to the sink with pairwise disjoint sets of
  interior vertices.\label{thm:hk}
  \qed
\end{theorem}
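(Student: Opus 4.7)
The plan is to reduce the claim to a connectivity statement via Menger's theorem and then to establish the required connectivity bound using the algebraic structure of P-matrices. By the vertex form of Menger's theorem applied to the $n$-cube, the existence of $n$ internally vertex-disjoint directed paths from the source $s$ to the sink $t$ of $\Phi$ is equivalent to the non-existence of an internal vertex cut $C\subseteq V\setminus\{s,t\}$ of size at most $n-1$ that separates $s$ from $t$ in the directed subgraph induced by $V\setminus C$. Both $s$ and $t$ have their $n$ incident cube-edges uniformly oriented (all outgoing from $s$, all incoming to $t$ by Lemma~\ref{lem:ucp}), so any small cut would have to sit strictly in the interior of the cube, and the task reduces to ruling out such small interior cuts.

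To rule them out, I would combine the algebraic characterization of P-LCP-induced USOs with induction on the dimension~$n$. The key structural fact is that for every $i\in[n]$, the principal submatrix of~$M$ obtained by deleting row and column~$i$ is again a P-matrix, since P-matrices are closed under taking principal submatrices. A Schur-complement computation should then show that each of the two facets obtained by fixing $v_i=0$ or $v_i=1$ carries a P-LCP-induced USO for a lower-dimensional P-LCP with an appropriately modified right-hand side. By the inductive hypothesis, each facet-USO admits $n-1$ internally vertex-disjoint directed paths between its own source and sink. I would then splice these two families of paths across the facets using $i$-coordinate edges as bridges, and add one further path that leaves $s$ immediately via its $i$-edge and is routed through the opposite facet to~$t$.

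The hardest step will be carrying out this splicing rigorously. It is \emph{not} automatic that the sink of the $s$-containing facet is joined to the source of the $t$-containing facet by an $i$-coordinate edge oriented the right way, and in fact the classical Holt--Klee argument for simple polytopes, which relies on a linear-functional ordering, does not transplant directly here because P-LCP-induced USOs may contain directed cycles, as Morris's examples show. The crux will therefore be to exploit the P-matrix structure---through inequalities relating the $A_B^{-1}q$ values across principal-basis changes---to guarantee that the facet-USO endpoints match up and that the bridging $i$-edges have the correct orientation. This algebraic ingredient is precisely where generic USO combinatorics is insufficient and where the P-LCP assumption is genuinely used.
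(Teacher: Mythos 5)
There is a genuine gap, and you have in fact located it yourself: the ``splicing'' step, which you defer to an unspecified algebraic argument, is not a technical loose end but the entire content of the theorem. Your two preparatory observations are sound. The reduction via the directed vertex form of Menger's theorem is the right first move, and the facet-heredity claim is correct: for the facet $\{v: v_i=0\}$ the equations $A_{B(v)}x=q$ in the coordinates $j\neq i$ decouple from $x_i$, so that facet carries the USO of $\LCP(M',q')$ where $M'$ is the principal submatrix of $M$ with row and column $i$ deleted (hence again a P-matrix), and the facet $\{v:v_i=1\}$ is handled by first applying the principal pivot transform on $i$; this is essentially Lemma~1 of \cite{StiWat:Digraph-models}. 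But the inductive composition then fails in both of the cases one must distinguish. If the global source $s$ and sink $t$ lie in opposite $i$-facets, then $s$ is the source of its facet and $t$ the sink of its facet, and induction gives $n-1$ internally disjoint paths from $s$ to the \emph{sink of the $s$-facet} and $n-1$ such paths from the \emph{source of the $t$-facet} to $t$; these two families connect the wrong endpoints, there is no single bridging $i$-edge between them, and joining them disjointly is a Menger-type problem of the same difficulty as the original. If $s$ and $t$ lie in the same facet, induction gives $n-1$ disjoint paths inside that facet, but the one additional path must traverse the opposite facet from $s\oplus i$ to $t\oplus i$, which are in general neither the source nor the sink of that facet; a directed path between two arbitrary prescribed vertices of a USO need not exist (Lemma~\ref{lem:hamming} only gives paths \emph{to the sink}), so even this single extra path is not supplied by the hypotheses. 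Since the Holt--Klee condition genuinely fails for some (non-P-LCP) USOs in dimension $\ge 4$ whose facets are of course still USOs, no purely facet-inductive combinatorial argument can close this gap; the P-matrix structure must enter, and your proposal does not say how.

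For calibration: the present paper does not prove this theorem at all --- it is quoted from \cite{GarMorRus:Unique}, which is why it appears with an empty proof. The argument there also begins with Menger's theorem, but it does not proceed by induction on facets; instead it works globally with a hypothetical separating set $C$ of at most $n-1$ vertices and uses the algebra of the P-LCP (genericity/perturbation of the right-hand side and the resulting solution paths of the parametrized problem) to exhibit a directed source-to-sink path avoiding $C$. If you want to complete a proof along your lines, you would essentially have to reinvent an argument of that global type for the splicing step, at which point the facet induction becomes superfluous.
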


\section{Pivot Rules}
\label{sec:pivot}

In the USO setting resulting from P-LCPs, simple principal pivoting
can be interpreted as follows: start from any vertex, and then proceed
along outgoing edges until the global sink (and thus the solution to
the P-LCP) is reached. A \emph{pivot rule} {\sc r} determines which
outgoing edge to choose if there is more than one option. Here is the
generic algorithm, parameterized with the rule {\sc r}. It outputs the
unique sink of the given USO~$\Phi$.

\begin{algorithm}~\par\medskip\noindent
$\textsc{SimplePrincipalPivoting}_{\textsc{r}}(\Phi)$
\begin{algorithmic}
\STATE Choose $v\in\{0,1\}^n$
\WHILE {${\cal O}:=\{j\in[n]:v\toPhi v\oplus j\} \neq \emptyset$}
    \STATE choose $i\in{\cal O}$ according to the pivot rule {\sc r}
    \STATE $v:= v\oplus i$
\ENDWHILE
\RETURN $v$
\end{algorithmic}
\label{alg:pivot}
\end{algorithm}

Note that when $\Phi$ contains directed cycles as in Figure
\ref{fig:uso}, this algorithm may enter an infinite loop for certain
rules {\sc r}, but we consider only rules for which this does not
happen. In the USO setting we are restricted to \emph{combinatorial}
rules. These are rules that may access only the orientations of edges
$\{v,v\oplus j\}$ (as given by the vertex evaluation oracle) but not
any numerical values such as $(A_{B(v)}^{-1}q)_j$ that define these
orientations. 

Let us now introduce the combinatorial pivot rules that are
of interest to us. We write them as functions of the set
${\cal O}$ of candidate indices. The first one is Murty's least-index
rule {$\textsc{Murty}$} that simply chooses the smallest candidate. In the cube, this may
be interpreted as a ``greedy'' approach that always traverses the first 
outgoing edge that it finds (in the order of the cube dimensions). 

\bigskip
\hbox to\linewidth{\hfil
\begin{minipage}[t]{.45\linewidth}
\noindent$\textsc{Murty}({\cal O})$
\begin{algorithmic}
\RETURN $\min({\cal O})$
\end{algorithmic}
\end{minipage}
\hfil
\begin{minipage}[t]{.45\linewidth}
\noindent$\textsc{Murty}_{\pi}({\cal O})$
\begin{algorithmic}
\RETURN $\pi(\min(\pi^{-1}({\cal O})))$
\end{algorithmic}
\end{minipage}\hfil}
\bigskip

It is easy to prove by induction on $n$ that this rule leads to a finite
number of iterations, even if the USO contains directed cycles. The rule
has a straightforward generalization, using a fixed permutation $\pi\in
S_n$: choose the index $\pi(i)$ with the smallest $i$ 
such that $v\to v\oplus\pi(i)$.
This just reshuffles the cube dimensions by applying $\pi$, and with
$\pi=\id$, we recover Murty's least-index rule.

The natural randomized variant of $\textsc{Murty}_{\pi}$ is
\textsc{RandomizedMurty}: at the beginning, choose the permutation~$\pi$
uniformly at random from the set~$S_n$ of all permutations, and then
use {$\textsc{Murty}_{\pi}$} throughout.

Finally, \textsc{RandomEdge} stands for the plain random walk: in each
iteration, simply choose a random candidate.

\bigskip
\hbox to\linewidth{\hfil
\begin{minipage}[t]{.51\linewidth}
\noindent$\textsc{RandomizedMurty}({\cal O})$
\begin{algorithmic}
\IF {called for the first time}
   \STATE choose $\pi\in S_n$ uniformly at random
\ENDIF
\RETURN $\textsc{Murty}_{\pi}({\cal O})$
\end{algorithmic}
\end{minipage}
\hfil
\begin{minipage}[t]{.43\linewidth}
\noindent$\textsc{RandomEdge}({\cal O})$
\begin{algorithmic}
  \STATE choose $i\in {\cal O}$ uniformly at random
  \RETURN $i$
\end{algorithmic}
\end{minipage}\hfil}
\bigskip

Unlike the previous rules, \textsc{RandomEdge} may lead to cycling in
$\textsc{SimplePrincipal}\-\textsc{Pivoting}_{\textsc{r}}$, but the
algorithm still terminates with probability $1$, and with an
expected number of at most $(n+1)n^n$ iterations. This is a simple consequence of the
fact that there is always a short directed path to the sink.

\begin{lemma}[{\cite[Proposition~5]{StiWat:Digraph-models}}]
\label{lem:hamming}
Let $t\in\{0,1\}^n$ be the global sink of an $n$-cube USO~$\Phi$, and let
$v\in\{0,1\}^n$ be any vertex. If $k$ is the Hamming distance between
$v$ and~$t$, then $\Phi$~contains a directed path of length~$k$ from
$v$ to~$t$.
\qed
\end{lemma}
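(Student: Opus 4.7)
The plan is to prove the statement by induction on the Hamming distance $k$. The base case $k=0$ is immediate: then $v=t$ and the empty path is a directed path of length $0$.

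For the inductive step, assume the claim holds for all Hamming distances less than $k$, and let $v$ be a vertex at Hamming distance $k\geq 1$ from $t$. Let $C:=\{j\in[n] : v_j\neq t_j\}$, so $|C|=k$, and consider the $k$-dimensional subcube $\subcube{\Phi}{U}$ whose vertex set is $U:=\{v\oplus I : I\subseteq C\}$. Note that both $v$ and $t$ lie in this subcube. Because $\Phi$ is a USO, the induced orientation on $\subcube{\Phi}{U}$ has a unique sink, and since $t$ is a sink of the entire cube, all edges of $\subcube{\Phi}{U}$ incident to $t$ are incoming; hence $t$ is the unique sink of $\subcube{\Phi}{U}$.

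The key observation is then that $v$ is not a sink of $\subcube{\Phi}{U}$ (since $v\neq t$ and the sink is unique), so $v$ has an outgoing edge within $\subcube{\Phi}{U}$: there is some $i\in C$ with $v\toPhi v\oplus i$. By the definition of $C$, flipping coordinate $i$ reduces the Hamming distance to $t$ by exactly one, so $v\oplus i$ is at Hamming distance $k-1$ from $t$. Applying the induction hypothesis to $v\oplus i$ yields a directed path of length $k-1$ from $v\oplus i$ to $t$; prepending the edge $v\to v\oplus i$ gives the required path of length $k$.

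The only point that requires care is ensuring that the outgoing edge from $v$ can be chosen inside the restricted subcube (so that it decreases the Hamming distance rather than increasing it); this is exactly what the unique-sink property of the subcube $\subcube{\Phi}{U}$ delivers, and is the reason to restrict attention to the coordinates in $C$ rather than work in the whole cube. Everything else is bookkeeping.
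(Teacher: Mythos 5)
Your proof is correct and complete: restricting to the subcube spanned by the coordinates where $v$ and $t$ differ, observing that $t$ remains the unique sink there, and inducting on the Hamming distance is exactly the standard argument. The paper itself gives no proof, citing Stickney and Watson instead, and your argument is the canonical one found there.
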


\section{The Morris orientations}
Morris proved that under $\textsc{r}=\textsc{RandomEdge}$,
Algorithm~\ref{alg:pivot} may be forced to perform
an expected \emph{exponential} number of iterations 
\cite{Mor:Randomized-pivot}. More precisely, at least
${((n-1)/2)!}$ iterations are required on average to find the sink
of the $n$-cube
USO generated (as described in Section \ref{sec:uso})
by the $\LCP(M,q)$, with
\begin{equation}
\label{eq:MorrisMq}
M=
\begin{pmatrix}
1&2&0&\dotso&0&0&0\\
0&1&2&\dotso&0&0&0\\
0&0&1&\dotso&0&0&0\\
\hdotsfor7\\
0&0&0&\dotso&1&2&0\\
0&0&0&\dotso&0&1&2\\
2&0&0&\dotso&0&0&1\\
\end{pmatrix},\qquad
q=
\begin{pmatrix}
-1\\-1\\-1\\\vdots\\-1\\-1\\-1
\end{pmatrix}.
\end{equation}
Here, $n$ must be an odd integer in order for $M$ to be a P-matrix.

To prove this result Morris first extracted the relevant structure of
the underlying USO, and then showed that on this USO, {\sc
  RandomEdge} runs in cycles for a long time before it finally
reaches the global sink. It was left as an open problem to determine
the expected performance of {\sc RandomizedMurty}, the other natural
randomized rule, on this particular USO.

We solve this open problem by showing that for \emph{any} permutation
$\pi$, $\textsc{Murty}_{\pi}$ incurs only $O(n^2)$ iterations on the
$n$-dimensional Morris orientation. In particular, this bound then also
holds for {\sc RandomizedMurty}.

\begin{theorem}
\label{thm:general}
For any $\pi\in S_n$, Algorithm \ref{alg:pivot} with
$\textsc{r}=\textsc{Murty}_{\pi}$ finds the sink of the $n$-dimensional
Morris orientation after at most $2n^2-(5n-3)/2$ iterations, from any start
vertex.
\end{theorem}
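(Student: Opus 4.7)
The plan is to combine an explicit combinatorial description of the Morris orientation with the classical ``least-index discipline'' argument. First I would compute $A_{B(v)}^{-1}q$ for a generic vertex $v\in\{0,1\}^n$ using the cyclic band structure of $M$ in \eqref{eq:MorrisMq}, and then read off from \eqref{eq:usodef} an explicit rule for $\Phi(v)_i$. Since $M$ has only three nonzero bands (diagonal, superdiagonal, and the cyclic bottom-left entry), I expect $\Phi(v)_i$ to depend only on a constant-size window of coordinates of $v$ around~$i$, taken cyclically. This local rule would expose the global shape of the Morris orientation as a single long directed cycle together with ``shortcuts'' descending towards the global sink, and in particular should show that the outgoing set $\mathcal{O}$ at any reachable vertex forms a contiguous cyclic interval of coordinates.

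Second, I would invoke the standard fact about the least-index rule: if $\textsc{Murty}_\pi$ flips coordinate $\pi(i)$ at vertex $v$, then by the choice of the rule none of $\pi(1),\dots,\pi(i-1)$ is outgoing at $v$, so the step takes place inside the subcube on which these higher-priority coordinates are pinned to their current values. An induction on $i$ then shows that, within any such pinned subcube, $\textsc{Murty}_\pi$ eventually settles $\pi(i)$ at its sink value for the restricted subcube, after which $\pi(i)$ is never flipped again while $\pi(1),\dots,\pi(i-1)$ stay fixed. The execution thus decomposes into at most $n$ phases, one per priority level.

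Third, I would bound the length of each phase by $O(n)$. Using the locality from the first step, within any phase the outgoing set at the current vertex is a contiguous cyclic interval, so $\textsc{Murty}_\pi$ picks its $\pi$-minimal element; a short case analysis should show that at most $O(n)$ such flips can occur before either the active interval becomes empty (terminating the phase) or the current highest-priority outgoing coordinate is flipped and thereafter fixed. Summing $O(n)$ iterations over the $n$ phases yields an $O(n^2)$ bound, and a careful count of the constants should lead to the stated $2n^2-(5n-3)/2$.

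The main obstacle, I expect, is the interaction between the direction of the long cycle in the Morris orientation and the ordering imposed by~$\pi$. Lemma~\ref{lem:hamming} guarantees a directed path of length at most $n$ from any start vertex to the sink, but $\textsc{Murty}_\pi$ is free to deviate from it, and for an ``adversarial'' permutation $\pi$ the algorithm may be tempted to traverse long stretches of the cycle before breaking out. Proving that the least-index discipline nevertheless caps such traversals at $O(n)$ edges per phase, uniformly over all permutations and all starting vertices, is the delicate part; it is precisely this uniformity that upgrades the result from ``\textsc{RandomizedMurty} is fast on average'' to ``\emph{every} reshuffling of Murty's rule is fast''.
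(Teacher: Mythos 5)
Your high-level shape (combinatorial description of the orientation, then exploit the least-index discipline, then sum phase lengths to $O(n^2)$) resembles the paper's, but two of your load-bearing claims fail and the actual key idea is missing. First, the outgoing set $\mathcal{O}$ at a reachable vertex is \emph{not} in general a contiguous cyclic interval: the Morris orientation produces configurations with alternating signs $+,-,+,-,\dots$ over a block of $1$'s (these occur already on the $\textsc{Murty}_{\id}$ trajectory analyzed in Lemma~\ref{lem:l4}), so $\mathcal{O}$ can be, e.g., the set of even coordinates. Second, your phase decomposition pins the wrong coordinates. In the Morris orientation the high-priority coordinates $\pi(1),\dots,\pi(i-1)$ are flipped back and forth many times --- that repeated flipping is exactly where the quadratic iteration count comes from --- so ``settles $\pi(i)$ and never flips it again'' is false for them. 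What the paper pins (Lemma~\ref{lem:visonly}) are the \emph{low}-priority coordinates $\pi(i+1),\dots,\pi(n)$ once they all carry a $1$: Murty's rule touches them only when every higher-priority coordinate is incoming, and by the unique-completion property (Lemma~\ref{lem:ucp}) that configuration is already the global sink.

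The missing idea that makes the argument work is a potential function, not a locality/interval argument. The paper defines $p(v)=|N_1(v)|+\sum_{j\in N_0(v)}j$, where $N_b(v)$ is the set of coordinates showing bit $b$ with sign $-$, and shows via the two pivot types that $p$ strictly decreases along \emph{every} directed edge. This yields a rule-independent statement (Lemma~\ref{lem:poten}): from any vertex with $v_k=0$, \emph{any} pivot sequence produces $v_k=1$ within $\binom{n+1}{2}$ steps, improving to $3(n-1)/2$ steps when the start vertex is at level~$1$ (Corollary~\ref{cor:poten}). The theorem then follows from one application of the quadratic bound (to set the last remaining $0$ in $\pi$-order, after which the current vertex is at level~$1$) plus at most $n-1$ applications of the linear level-$1$ bound, interleaved with Lemma~\ref{lem:visonly} to ensure the created $1$'s persist. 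Your worry about ``long stretches of the cycle'' is resolved not by the least-index discipline per se but by the fact that the facets $\{v:v_k=0\}$ induce acyclic orientations with only short directed paths; without some such monotone quantity your per-phase $O(n)$ bound has no support.
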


Computational experiments suggest that the worst running time is
attained for the identity permutation if the algorithm starts from $0$
(the global source). Therefore we analyze this case thoroughly and
compute the precise number of iterations.

\begin{theorem}
\label{thm:id}
Algorithm \ref{alg:pivot} with $\textsc{r}=\textsc{Murty}_{\id}$ finds the
sink of the $n$-dimensional Morris orientation after $(n^2+1)/2$
iterations, starting from $0$.
\end{theorem}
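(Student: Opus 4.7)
The plan is to describe the trajectory of $\textsc{Murty}_{\id}$ on the $n$-dimensional Morris orientation starting from $v=0$ in closed form and then count its length exactly. I would proceed in two main steps: derive an explicit combinatorial description of the orientation, then give an inductive analysis of the trajectory.

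First, I would obtain a combinatorial characterization of the outgoing set at each vertex. Because $M$ has the cyclic pattern $M_{ii}=1$, $M_{i,i+1\bmod n}=2$, and zero elsewhere, for any $B\subsetneq[n]$ the principal submatrix $M_B$ is block upper triangular once $B$ is split into its maximal cyclic arcs. Solving $M_B z_B=\mathbf{1}$ arc-by-arc, the entries of $z_B$ along an arc of length $\ell$ follow the recurrence $a_k=1-2a_{k-1}$ with $a_0=1$, whose closed form $a_k=(1+2(-2)^k)/3$ alternates in sign. For $i\notin B$, a similar computation gives $(A_B^{-1}q)_i=-1+2[i{+}1\in B]\,z_{i+1}$. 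Reading off the signs of $(A_B^{-1}q)_i$ in both cases yields an explicit description of the outgoing coordinates at $v(B)$ purely in terms of arc lengths and positions within arcs.

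Second, I would establish the trajectory by induction on odd~$n$. Let $T(n)$ denote the number of iterations. The base case $T(3)=5$ is checked directly against the $3$-cube USO in Figure~\ref{fig:uso}. For the inductive step, I claim the trajectory decomposes into two phases: an initial phase of $T(n-2)$ iterations during which bits $n-1$ and $n$ remain $0$ and the evolution on bits $1,\ldots,n-2$ mirrors the $\textsc{Murty}_{\id}$ trajectory on the $(n-2)$-dimensional Morris orientation, culminating at the vertex $(1,\ldots,1,0,0)$; followed by a second phase of $2(n-1)$ further iterations that sets bits $n-1$ and $n$ to~$1$ through a prescribed sequence of intermediate configurations on lower bits. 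Summing yields the recurrence $T(n)=T(n-2)+2(n-1)$, which telescopes with $T(3)=5$ to $T(n)=(n^2+1)/2$.

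The main obstacle will be justifying the embedding in the first phase: showing that as long as bits $n-1$ and $n$ are both zero, the outgoing set restricted to $\{1,\ldots,n-2\}$ at every visited vertex coincides with the outgoing set of the $(n-2)$-dimensional Morris orientation. This amounts to verifying that the wrap-around contribution from the nonzero entry $M_{n,1}=2$ introduces no extra outgoing direction in $\{1,\ldots,n-2\}$ whenever bit $n$ is zero, a fact that follows from the arc-parity characterization above but must be checked case-by-case depending on the arc structure of $B$ at the cyclic boundary. The second phase can then be traced vertex-by-vertex using the same combinatorial description.
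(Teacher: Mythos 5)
Your decomposition of the trajectory is correct as a description of what happens: the run does spend its first $T(n-2)$ iterations in the facet $\{v:v_{n-1}=v_n=0\}$, arrives at $(1,\dots,1,0,0)$, and needs $2(n-1)$ further iterations; the recurrence $T(n)=T(n-2)+2(n-1)$ with $T(3)=5$ indeed telescopes to $(n^2+1)/2$. The gap is in the lemma you single out as the main obstacle, and it is not just a matter of checking cases: the claim that, while bits $n-1$ and $n$ are zero, the outgoing set restricted to $\{1,\dots,n-2\}$ coincides with that of the $(n-2)$-dimensional Morris orientation is \emph{false}, already at vertices on the trajectory. The orientation induced on that facet corresponds to the principal submatrix $M_{[n-2],[n-2]}$, which has \emph{no} wrap-around entry, so it is not the $(n-2)$-dimensional Morris matrix; combinatorially, the cyclic boundary sits at $n\to1$ in one orientation and at $(n-2)\to1$ in the other. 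Concretely, for $n=5$ the second vertex visited is $(1,0,0,0,0)$ with configuration $\left(\begin{smallmatrix}1&0&0&0&0\\+&-&-&-&+\end{smallmatrix}\right)$, so coordinate $3$ is outgoing, whereas the corresponding vertex $(1,0,0)$ of the $3$-dimensional orientation has configuration $\left(\begin{smallmatrix}1&0&0\\+&-&+\end{smallmatrix}\right)$, with coordinate $3$ incoming: the restricted outgoing sets are $\{2,3\}$ versus $\{2\}$.

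The mirroring you want is nevertheless true, but only because $\textsc{Murty}_{\id}$ queries just the \emph{leftmost} outgoing coordinate, and at every visited vertex the discrepancies happen to occur strictly above it. Proving that requires knowing which coordinate is the minimum of the outgoing set at each vertex of the first phase, which in effect forces an explicit trace of that phase --- so the facet-restriction argument does not buy you the shortcut you are hoping for. This is essentially what the paper does, with a different slicing of the same trajectory: it introduces milestones $v^i=(0,1,0,1,\dots,0,1,0,\dots,0)$, shows by a four-stage trace that the leg from $v^i$ to $v^{i+1}$ takes exactly $4i+3$ steps and the final leg $(n+1)/2$ steps, and sums. Your prefix/suffix split and the paper's leg-by-leg split are both valid bookkeeping schemes, but neither avoids tracing the pivots explicitly; to repair your write-up, replace the false ``outgoing sets coincide'' claim by an explicit determination of the leftmost outgoing coordinate along the first phase (or simply adopt the milestone trace).
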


Experimental data suggest that this is an upper bound on the number of
iterations for any permutation $\pi$ and start vertex $0$. Allowing
start vertices different from $0$, we get slightly higher iteration
numbers, but $\pi=\id$ continues to be the worst permutation. In view
of this, we conjecture that $n^2/2+O(n)$ iterations actually suffice
for all $\pi$ and all start vertices. This would mean that 
the bound in Theorem \ref{thm:general} is still off by a factor of~$4$.

\paragraph{The combinatorial structure.}
The $n$-cube USO resulting from (\ref{eq:MorrisMq}) can be described in
purely combinatorial terms \cite[Lemma 1]{Mor:Randomized-pivot}. Here
we make its structure even more transparent by exhibiting a
\emph{finite state transducer} (finite automaton with output) with
just two states that can be used to describe the USO, see Figure
\ref{fig:MorrisFST}. For the remainder of this section, we fix $n$ to
be an odd integer and we are considering the $n$-dimensional
Morris orientation~$\phi$.

\begin{figure}[htb]
\begin{center}
\includegraphics{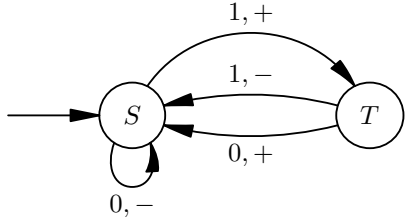}
\end{center}
\caption{The finite state transducer that generates the Morris
  orientations. Each transition is labeled with an input symbol (bit) and
an output symbol (sign)\label{fig:MorrisFST}}
\end{figure}

The orientation is then determined for each vertex
$v=(v_1,v_2,\dotsc,v_n)\in\{0,1\}^n$ as follows. If
$v_1=v_2=\dotsb=v_n=1$, then $v$~is the global sink.  Otherwise
choose some~$i$ in~$[n]$ so that $v_i=0$ and consider the bit
string $v_{i-1}v_{i-2}\dots v_1v_nv_{n-1}\dots v_i$ as the input
string to the transducer. In other words, the transducer is reading the
input from right to left, starting immediately to the left of some $0$. 
The output string
$\phi(v)_{i-1}\phi(v)_{i-2}\dots \phi(v)_1\phi(v)_n\phi(v)_{n-1}\dots
\phi(v)_i$ then determines the orientation of each edge incident
to~$v$. The choice of~$i$ does not matter, because after reading any zero,
the transducer is in the state~$S$.

For instance, let $n=5$ and let $v=(1,0,1,1,0)$. Then the transducer
takes $v_4=1$, outputs $+$ and switches to state~$T$; reads $v_3=1$,
outputs $-$ and switches to~$S$; reads $v_2=0$, outputs $-$ and stays
in~$S$; reads $v_1=1$, outputs $+$ and switches to~$T$; and finally reads
$v_5=0$, outputs~$+$ and switches to~$S$. The resulting configuration for~$v$ 
is
$\left(\begin{smallmatrix}
1&0&1&1&0\\
+&-&-&+&+\\
\end{smallmatrix}\right)$.

Figure \ref{fig:uso} depicts the Morris orientation for $n=3$. It
can easily be checked that the above procedural definition of the 
orientation is equivalent to the one given in \cite[Lemma
1]{Mor:Randomized-pivot}. In particular, we obtain a USO. It has 
the remarkable symmetry that the set of configurations is closed under
cyclic shifts.

\paragraph{Pivoting.} From the transducer
we can easily derive the sign changes in the
configuration~$\left(\begin{smallmatrix}v\\\phi(v)\\\end{smallmatrix}\right)$
that are caused
by a pivot step $v\to u:=v\oplus\{i\}$. We always have $\phi(v)_i=-$
and $\phi(u)_i=+$. 

If $v_i=1$, we get $u_i=0$. In processing $v$ and $u$, the transducer
then performs exactly the same steps, except that at the $i$th
coordinate different transitions are used to get from~$T$ to~$S$. But
this implies $\phi(u)_j=\phi(v)_j$ for $j\neq i$. Here is an example for
such a pivot step (with $i=3$; affected signs are indicated):
\begin{equation}
\label{eq:pivot1}
\left(\begin{smallmatrix}
1&0&1&1&0\\
+&-&-&+&+\\
\end{smallmatrix}\right) \to 
\left(\begin{smallmatrix}
1&0&0&1&0\\
+&-&\oplus&+&+\\
\end{smallmatrix}\right)
\end{equation}

If $v_i=0$, we get $u_i=1$, meaning that at the $i$th coordinate the
transducer stays in~$S$ for~$v$ but switches to~$T$ for~$u$ (assuming
that $u$~is not the sink). This implies that signs change at all
coordinates $i-1$, $i-2$ down to (and including) the next
coordinate~$k$ (wrap around possible) for which $v_k=u_k=0$. In our
example we have such a step for $i=2$:
\begin{equation}
\label{eq:pivot2}
\left(\begin{smallmatrix}
1&0&1&1&0\\
+&-&-&+&+\\
\end{smallmatrix}\right) \to 
\left(\begin{smallmatrix}
1&1&1&1&0\\
\ominus&\oplus&-&+&\ominus\\
\end{smallmatrix}\right)
\end{equation}
In both $v$ and $u$, the signs in the block of $1$'s at indices
$k+1,\dotsc,i-1$ alternate.

\paragraph{Levels.}
It will be useful to define the following function on the vertices of
the cube. Let $\ell(v)$ be the number of coordinates where $\binom{0}{-}$
appears. Formally,
\[\ell(v) := |\{i: v_i=0 \text{ and } \phi(v)_i = {-} \}|.\]
The number $\ell(v)$ is called the \emph{level} of~$v$. From the two
types of pivots, it is easy to see that the value of~$\ell$ does
not increase along any directed
edge~\cite[Lemma~3]{Wes:Notes-on-Morris}. Indeed, if $u$~is an
out-neighbor of~$v$, then either $\ell(u)=\ell(v)$ (this happens
in every pivot of type \eqref{eq:pivot1}, and in pivots of type
\eqref{eq:pivot2} with an odd block of $1$'s at indices $k+1,\dotsc,i-1$),
or $\ell$
decreases to the next possible value: $\ell(u)=\ell(v)-2$, or
$\ell(u)=0$ if $\ell(v)=1$. In particular, the values of~$\ell$ lie in
the set $\{n,n-2,\dotsc,3,1,0\}$.  The sink is the only vertex at
level~0.

Let us make a small digression and briefly explain why
\textsc{RandomEdge} is slow on~$\Phi$. Let $L(v):=|\{i:v_i=1 \text{
  and } \phi(v)_i = {-} \}|$. So the outdegree of~$v$ is
$\ell(v)+L(v)$. Now let $v$ be a vertex at level~$1$. In pivots of
type \eqref{eq:pivot1}, $L$~decreases by one, whereas in pivots of
type \eqref{eq:pivot2} that do not reach the sink, $L$~increases by
one. The latter occurs if and only if \textsc{RandomEdge} pivots at the unique
$\binom{0}{-}$, which happens with probability $1/(L(v)+1)$.

At level~$1$, we can thus interpret \textsc{RandomEdge} as a random
walk on the integers, where $k\rightarrow k+1$ with probability
$1/(k+1)$, and $k\rightarrow k-1$ with probability $k/(k+1)$.  Therefore
the walk is strongly biased towards~$0$, but in order to reach a
neighbor~$u$ of the sink, we need to get to $k=L(u)=(n-1)/2$. This
takes exponentially long in expectation~\cite{Mor:Randomized-pivot}.

\paragraph{A quadratic bound for $\textsc{Murty}_{\pi}$.}
To prove Theorem~\ref{thm:general} we first derive a (somewhat
surprising) more general result: for \emph{any} pivot rule, starting
from any vertex $v$ with $v_k=0$ for some $k$, it takes $O(n^2)$
iterations to reach a vertex $u$ with $u_k=1$. This can be viewed as a
statement about the USO induced by the $n$-dimensional Morris
orientation on the cube facet $\{v:v_k=0\}$. The statement is that in
this induced USO, all directed paths are short (in particular, the
induced USO is acyclic).

\begin{lemma}
\label{lem:poten}
Let $v\in\{0,1\}^n$ be a vertex, and let $k$ be any index for which
$v_{k}=0$. Starting from~$v$, Algorithm \ref{alg:pivot} (with an arbitrary
pivot rule {\sc r}) reaches a vertex~$u$ satisfying $u_{k}=1$
after at most $\binom{n+1}{2}$ iterations.
\end{lemma}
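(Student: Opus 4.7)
The plan is to construct a nonnegative integer potential $\Psi : F_k \to \mathbb{Z}_{\ge 0}$ on the face $F_k := \{v\in\{0,1\}^n : v_k = 0\}$ that strictly decreases at every pivot performed inside $F_k$ and satisfies $\Psi(v) \le \binom{n+1}{2}-1$ initially; the single pivot that finally flips $v_k$ then supplies the last iteration.

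The main scaffold is the level function $\ell(v)$ introduced above: on $F_k$ it takes values in $\{n, n-2,\ldots, 3, 1\}$ (hence $(n+1)/2$ possible values), is nonincreasing along directed edges, and strictly decreases---by exactly $2$---only at pivots of type~\eqref{eq:pivot2} whose adjacent block of $1$'s has even length. After setting up the cyclic arc decomposition of each $v \in F_k$ (the partition of $[n]$ into arcs of consecutive $1$'s separated by the $0$-positions, always including $k$), the pivots translate into arc operations: type~\eqref{eq:pivot1} splits an arc (adding a $0$) and type~\eqref{eq:pivot2} merges two adjacent arcs (removing a $0$), with parity conditions that link the two pivot types to the two regimes for $\ell$. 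I would then take
\[
\Psi(v) \;=\; n\cdot \frac{\ell(v)-1}{2} + q(v),
\]
with $q$ a secondary within-level potential valued in $\{0,1,\ldots,n-1\}$ that strictly decreases at every level-preserving pivot (i.e.\ those of type~\eqref{eq:pivot1} and type~\eqref{eq:pivot2}-with-odd-block). Each of the $(n+1)/2$ possible $\ell$-values is then responsible for at most $n$ iterations (up to $n-1$ within-level pivots, plus one pivot that either drops the level or, at level~$1$, leaves $F_k$), giving the bound $n\cdot(n+1)/2 = \binom{n+1}{2}$.

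The main obstacle is the construction of $q$. Within a fixed level, type~\eqref{eq:pivot1} pivots split an arc (increasing the number $z$ of $0$'s by one), while type~\eqref{eq:pivot2}-odd pivots merge an odd arc with an adjacent even arc (decreasing $z$ by one), so $z$ and other natural candidate quantities can oscillate freely. The key observation is that because $v_k = 0$ is pinned throughout the trajectory, the arc immediately above $k$ in cyclic order is always well defined, and its parity can only be flipped by a type~\eqref{eq:pivot2} pivot at the $0$ bounding it from above. Using this asymmetry I would define $q(v)$ as a weighted count attached to the $0$-positions other than $k$, with weights depending monotonically on cyclic distance from $k$, chosen so that each type~\eqref{eq:pivot2}-odd pivot can be amortized against the earlier type~\eqref{eq:pivot1} pivot that originally introduced the odd arc now being consumed in the merge. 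Making the weights work for both families of level-preserving pivots simultaneously---so that $q$ strictly decreases at each one while staying within $[0,n-1]$---is the delicate point of the argument.
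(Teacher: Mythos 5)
There is a genuine gap, and it is not merely that the secondary potential $q$ is left unconstructed: no $q$ with the properties you require can exist. Your framework forces every maximal run of consecutive level-preserving pivots inside $F_k$ to have length at most $n-1$, since $q$ must strictly decrease at each such pivot while staying in $\{0,1,\dots,n-1\}$. This is false for $n\ge 5$. Take $k=n$ and follow the $\textsc{Murty}_{\id}$ trajectory from $0$ analyzed in Lemma~\ref{lem:l4}: in round $i=(n-3)/2$, after its first phase the current vertex is $(1^{2i+1},0^{n-2i-1})$, which is already at level~$1$, and the walk then performs the remaining $3i+2=(3n-5)/2$ pivots of that round without ever touching coordinate~$n$ and without changing the level (the level is nonincreasing and equals $1$ again at $v^{(n-1)/2}$). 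So there are $(3n-5)/2>n-1$ consecutive level-preserving pivots inside $F_n$ at level~$1$. Corollary~\ref{cor:poten} reflects the same phenomenon: a level-$1$ start may need up to $3(n-1)/2>n$ iterations before leaving the face. The uniform budget of $n$ iterations per level value in $\Psi(v)=n\cdot(\ell(v)-1)/2+q(v)$ is therefore too small at low levels, and no choice of weights for $q$ can repair this; the per-level budget would have to grow as the level shrinks, which your rigid decomposition does not allow.

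For comparison, the paper sidesteps the level function entirely and uses a single flat potential: after cyclically shifting coordinates so that $k=1$, it sets $p(v)=|N_1(v)|+\sum_{j\in N_0(v)}j$, where $N_0(v)$ and $N_1(v)$ are the sets of coordinates carrying $\binom{0}{-}$ and $\binom{1}{-}$ respectively. Attaching the weight $j$ to each $\binom{0}{-}$ at position $j$ (rather than spreading weights uniformly over levels) makes \emph{every} pivot decrease $p$ by at least $1$: a type~\eqref{eq:pivot1} pivot removes one element of $N_1$, and a type~\eqref{eq:pivot2} pivot at position $i>1$ replaces the $\binom{0}{-}$ at $i$ by one at some $j\le i-2$ (or removes both), which pays for the possible unit increase of $|N_1|$. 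Since $p\le p(0)=\binom{n+1}{2}$ and a type~\eqref{eq:pivot2} pivot at coordinate $1$ ends the argument, the bound follows. If you want to keep an amortization flavor, the weights must live on the individual $\binom{0}{-}$ positions as in the paper, not on level classes.
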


\begin{proof}
  We employ a nonnegative potential that decreases with every
  pivot step. By a cyclic shift of coordinates we may assume that
  $k=1$, and thus $v_1=0$.

  Let us define
  \begin{align*}    
    N_0(v) &:=  \{j\in[n]: v_j = 0 \text{ and } \phi(v)_j={-}\},\\
    N_1(v) &:= \{j\in[n]: v_j = 1 \text{ and } \phi(v)_j={-}\}.
  \end{align*}
  The \emph{potential} of $v$ is the number
  \begin{equation*}
    p(v) = |N_1(v)| + \sum_{j\in N_0(v)} j.
  \end{equation*}

  Now we will observe how the potential changes during a pivot step
  $v\to u=v\oplus\{i\}$. We always have $\phi(v)_i=-$, $\phi(u)_i=+$.
  There are two cases. If $v_i=1$, as in \eqref{eq:pivot1}, the
  configurations of $v$ and $u$ differ only at index $i$. Hence
  $N_0(u) = N_0(v)$ and $N_1(u)=N_1(v)\setminus\{i\}$. Therefore
  $p(u)=p(v)-1$.

  In the other case we have $v_i=0$, as in \eqref{eq:pivot2}.  If
  $i=1$, we are done, since then $u_1=1$. Otherwise, $v$ is of the
  form 
  \[ 
  v=(v_1,\ldots,\underbrace{v_{j}}_{0},\underbrace{v_{j+1},\ldots,
    v_{i-1}}_{1,\ldots, 1},\underbrace{v_i}_{0},v_{i+1},\ldots,v_n),
  \]
  with $1\leq j<i$. If $i-j$ is odd, there is an even number of $1$'s
  between $v_{j}$ and $v_i$, implying that $|N_1(u)|=|N_1(v)|$ and
  $N_0(u)=N_0(v)\setminus\{j,i\}$. We then have $p(u)<p(v)$. If
  $i-j$ is even, there is an odd number of $1$'s between $v_{j}$
  and $v_i$, implying that $|N_1(u)|=|N_1(v)|+1$ and
  $N_0(u)=N_0(v)\cup\{j\}\setminus\{i\}$. Since $i-j\geq 2$ in
  this case, we get $p(u)\leq p(v)-1$.

  To summarize, the potential decreases by at least 1 in every pivot
  step $v:=v\oplus i$, and as long as $v$ is not the sink, we have
  $p(v)\geq 1$. The highest potential is attained by the source,
  $p(0)=\binom{n+1}{2}$. Therefore after at most $\binom{n+1}{2}$
  steps we have $v_1=1$.
\end{proof}

If $v$ happens to be at level~1 already, we obtain a better bound,
since $|N_0(v)|=1$ in this case. In fact, the vertex with largest
potential in level~1 such that $v_1=0$ is $v=\left(\begin{smallmatrix}
    0&1&1&1&1&\dotso&1&1&1&0\\
    +&+&-&+&-&\dotso&+&-&+&-
\end{smallmatrix}\right)$, and its potential is $p(v)=3(n-1)/2$.

\begin{cor}
\label{cor:poten}
Let $v\in\{0,1\}^n$ be a vertex at level~1, and let $k$ be any index for which
$v_{k}=0$. Starting from~$v$, Algorithm \ref{alg:pivot} (with arbitrary
pivot rule {\sc r}) reaches a vertex~$u$ satisfying $u_{k}=1$
after at most $3(n-1)/2$ iterations.
\qed
\end{cor}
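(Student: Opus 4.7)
The plan is to reuse the nonnegative potential $p(v) = |N_1(v)| + \sum_{j \in N_0(v)} j$ from the proof of Lemma~\ref{lem:poten}. That proof established two crucial facts: $p$ drops by at least one in every pivot step, and $p(v) \geq 1$ as long as $v$ is not the sink. Consequently, starting from any $v$ we must reach some $u$ with $u_k = 1$ after at most $p(v)$ iterations. Since the Morris configurations are closed under cyclic shifts, I would first shift coordinates so that $k = 1$. Everything then hinges on showing that $p(v) \leq 3(n-1)/2$ for any level-1 vertex $v$ with $v_1 = 0$; this structural bound is the main obstacle.

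To establish it, I would read the structure of $v$ off the transducer. Let $m$ denote the number of zeros of $v$; they partition the ones into $m$ cyclic blocks of lengths $\ell_1, \ldots, \ell_m$ with $\sum_s \ell_s = n - m$. Inside each block the transducer outputs the alternating pattern $+, -, +, -, \ldots$ starting from state~$S$, so block $s$ contributes $\lfloor \ell_s/2 \rfloor$ positions to $N_1(v)$ and leaves the transducer in state $T$ exactly when $\ell_s$ is odd; the subsequent zero therefore lands in $N_0(v)$ precisely when $\ell_s$ is even. The level-1 hypothesis forces exactly one of the $\ell_s$ to be even, and a quick summation yields $|N_1(v)| = (n - 2m + 1)/2$. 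The remaining summand $\sum_{j \in N_0(v)} j$ equals the single position $j_0 \leq n$ of the $\binom{0}{-}$-coordinate, so a short case split on $m$ finishes: for $m = 1$ one has $j_0 = 1$ and $p(v) = (n+1)/2$, while for $m \geq 2$ one obtains $p(v) \leq (n - 2m + 1)/2 + n \leq 3(n-1)/2$, with equality attained at the extremal vertex displayed just before the statement.
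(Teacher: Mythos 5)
Your proposal is correct and takes the same route as the paper: reuse the potential $p$ from Lemma~\ref{lem:poten} and bound it over level-1 vertices with $v_1=0$ (after the cyclic shift making $k=1$), the paper simply asserting that the maximum is $3(n-1)/2$, attained at the displayed vertex. Your transducer/block analysis supplies the verification the paper omits, and the computation ($|N_1(v)|=(n-2m+1)/2$, the single $N_0$-index at most $n$, worst case $m=2$) is accurate.
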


Now $\textsc{Murty}_{\pi}$ comes in: we will apply either
Lemma~\ref{lem:poten} or Corollary~\ref{cor:poten} to vertices $v$
with $v_{\pi(i)}=0$ and $v_{\pi(i+1)}=\dotsb=v_{\pi(n)}=1$, for some
$i$. The next lemma shows that the $1$'s at coordinates
$\pi(i+1),\ldots,\pi(n)$ will stay.

\begin{lemma}
\label{lem:visonly}
Let $i\in[n]$. After Algorithm \ref{alg:pivot} with $\textsc
{r}=\textsc{Murty}_{\pi}$ visits a vertex~$v$ with
$v_{\pi(i+1)}=\dotsb=v_{\pi(n)}=1$, it will visit only vertices~$u$
satisfying $u_{\pi(i+1)}=\dotsb=u_{\pi(n)}=1$.
\end{lemma}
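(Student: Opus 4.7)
The plan is to invoke the subcube structure of the USO. Consider the set
\[
U := \{u\in\{0,1\}^n : u_{\pi(i+1)}=\dots=u_{\pi(n)}=1\},
\]
which is the vertex set of a subcube of dimension $i$. I will show that once the algorithm enters $U$ it never leaves, and the lemma follows by induction on the number of pivot steps performed after the first visit to such a vertex.

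The first step is to identify the sink of the induced subcube $\subcube{\phi}{U}$. Since $\phi$ is a USO (as already noted by Stickney and Watson, reaffirmed via the finite state transducer description), $\subcube{\phi}{U}$ is itself a USO and therefore has a unique sink. From the FST description, the global sink of the whole Morris orientation is $(1,1,\ldots,1)$, which lies in $U$ for every $i$ and every permutation $\pi$; since the global sink has no outgoing edges at all, it is in particular the sink of $\subcube{\phi}{U}$.

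The second step is to exploit this to argue that at every vertex $v\in U$ other than the global sink, some coordinate $\pi(k)$ with $k\leq i$ is outgoing. Indeed, if $v\in U$ and $v\neq(1,\ldots,1)$, then $v$ is not the sink of $\subcube{\phi}{U}$, so $v$ has an outgoing edge inside $U$; by the very definition of $U$, the edges of $\subcube{\phi}{U}$ flip exactly the coordinates $\pi(1),\ldots,\pi(i)$, hence there exists $k\leq i$ with $\phi(v)_{\pi(k)}={-}$.

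The final step is to combine this with the pivot rule. When $\textsc{Murty}_\pi$ is applied at such $v$, it chooses the coordinate $\pi(j)$ with $j=\min\{\,k : \phi(v)_{\pi(k)}={-}\,\}$; by the previous paragraph this minimum satisfies $j\leq i$, so the resulting vertex $u=v\oplus \pi(j)$ still lies in $U$. Iterating this one-step argument from the moment the algorithm first enters $U$ proves the lemma. The argument is quite clean; the only point that needs a little care is the identification of the subcube's sink with the global sink, which is immediate from the FST description but worth stating explicitly.
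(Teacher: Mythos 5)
Your proof is correct and follows essentially the same route as the paper's: the paper invokes the unique-completion property (Lemma~\ref{lem:ucp}) with bits $1$ prescribed on $\pi(i+1),\dots,\pi(n)$ and signs $+$ on $\pi(1),\dots,\pi(i)$, which is exactly your observation that the sink of the subcube $U$ must be the global sink $(1,\dots,1)$, so every non-sink vertex of $U$ has an outgoing coordinate $\pi(k)$ with $k\le i$ and $\textsc{Murty}_{\pi}$ stays inside $U$. Your explicit remark that the identification of the subcube's sink with the global all-ones sink is the crux is a fair point; the paper leaves it implicit.
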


\begin{proof}
  At every iteration, the algorithm replaces the current vertex $v$
  with $v\oplus\pi(j)$, where $j$ is the smallest index such that
  $v\to v\oplus\pi(j)$. It follows that coordinates
  $\pi(i+1),\ldots,\pi(n)$ will be touched only if $v\oplus\pi(j)\to
  v$ for all $j\leq i$.  When this holds for the first
  time, we have
  \begin{alignat*}{2}
    v_{\pi(j)} &= 1, &\quad&\text{for $j>i$},\\
    \phi(v)_{\pi(j)} &= +, &&\text{for $j\leq i$}.
  \end{alignat*}
  
The unique-completion property (Lemma \ref{lem:ucp}) then implies
that we have already reached the sink.
\end{proof}

Now we can put it all together. 

\begin{proof}[Proof of Theorem~\ref{thm:general}]
  Let $v$ be the start vertex, and let $i$ be the largest index such
  that $v_{\pi(i)}=0$ (if no such index exists, $v$ is the sink, and
  we are done). Lemmas~\ref{lem:poten} and \ref{lem:visonly} show that
  after at most $\binom{n+1}{2}-1$ iterations, we additionally have
  $\phi(v)_{\pi(1)}=\phi(v)_{\pi(2)}=\cdots=\phi(v)_{\pi(i-1)}=+$ (since
  the next pivot step flips $v_{\pi(i)}$).  At this point $v$~is at
  level~1, since there is a unique~$\binom{0}{-}$: at
  coordinate~$\pi(i)$. After the flip at coordinate~$\pi(i)$, we repeatedly apply
  Corollary~\ref{cor:poten} together with Lemma \ref{lem:visonly} to
  also produce $1$'s at coordinates
  $\pi({i-1}),\ldots,\pi({i-2}),\ldots,\pi(1)$, at which point we have
  reached the sink. This takes at most $3(i-1)(n-1)/2$ more
  iterations, summing up to a total of
\[\binom{n+1}{2} + 3(i-1)(n-1)/2 \leq \binom{n+1}{2} + 3(n-1)(n-1)/2
  = 2n^2-(5n-3)/2.
\]  
\end{proof}

A better bound in the case of the identity permutation can be
achieved by thoroughly examining the run of the algorithm, as we do next.

\paragraph{The identity permutation.}
Let us now go on to prove Theorem~\ref{thm:id}. For this, we first
identify certain ``milestone'' vertices that are visited by Algorithm
\ref{alg:pivot} with $\textsc{r}=\textsc{Murty}_{\id}$, and then we
count the number of iterations to get from one milestone to the next.
Let us define
\[v^i := (\underbrace{0,1,0,1\ldots,0,1}_{2i},0,0,\ldots,0),
\quad i=0,\ldots,(n-1)/2.\]

\begin{lemma}
\label{lem:l4}
If Algorithm
\ref{alg:pivot} with $\textsc{r}=\textsc{Murty}_{\id}$ starts
at the vertex $v^i$, $0\leq i<(n-1)/2$, it gets to the vertex $v^{i+1}$
in $4i+3$ iterations.
\end{lemma}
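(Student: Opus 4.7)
The plan is to construct the trajectory explicitly and verify at each step that $\textsc{Murty}_{\id}$ picks the predicted coordinate. I would start by writing down the candidate $4i+3$-step trajectory encoded by the palindromic sequence of coordinates
\[
\sigma = (2i+1,\,2i-1,\,\dots,\,3,\,1,\ 2,\,4,\,\dots,\,2i,\ 2i+2,\ 2i,\,2i-2,\,\dots,\,4,\,2,\ 1,\,3,\,\dots,\,2i-1,\,2i+1)
\]
of length $(i+1)+i+1+i+(i+1)=4i+3$. Setting $u^{(j)}:=v^i\oplus\{\sigma_1,\dots,\sigma_j\}$, every coordinate in $\{1,\dots,2i+1\}$ appears exactly twice in $\sigma$ while $2i+2$ appears once, so $u^{(4i+3)}=v^i\oplus\{2i+2\}=v^{i+1}$. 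It then remains to show that for each $j\in\{0,\dots,4i+2\}$ the coordinate $\sigma_{j+1}$ equals $\min\{k:\phi(u^{(j)})_k=-\}$.

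The key simplification I would exploit is that coordinates $2i+3,\dots,n$ stay at $0$ throughout the trajectory, so when the transducer of Figure~\ref{fig:MorrisFST} is started at any index in that range, the leading block of zeros keeps it in state $S$ until it reaches the active prefix $u^{(j)}_{2i+2}u^{(j)}_{2i+1}\cdots u^{(j)}_1$; the output signs on coordinates $1,\dots,2i+2$ therefore depend only on those $2i+2$ bits. I would then split the trajectory into five phases matching the five blocks of $\sigma$: descending odds (Phase~A), ascending evens (Phase~B), the single flip of $2i+2$ (Phase~C), descending evens (Phase~D), and ascending odds (Phase~E). Within each phase the active bit pattern consists of a ``$1$-block'' of consecutive ones adjacent to an ``alternating block'' replicating part of $v^i$, with the interface between them sliding by one at each step. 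Running the transducer from state $S$ on such a pattern yields a closed-form description of $\phi(u^{(j)})$, from which one reads off $\min\{k:\phi(u^{(j)})_k=-\}$ and verifies that it equals $\sigma_{j+1}$.

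The main technical obstacle I expect is the bookkeeping through the four phase boundaries, together with ruling out the possibility that an inactive coordinate in $\{2i+3,\dots,n\}$ ever wins the least-index race. The latter point is handled by the general observation that any ``inactive'' $-$-sign sits at position at least $2i+3$, which is larger than the active candidate $\sigma_{j+1}\le 2i+2$ identified above. The palindromic symmetry of $\sigma$ serves as a useful consistency check pairing Phase~A with~E and Phase~B with~D, but the Phase~C flip of coordinate $2i+2$ breaks exact symmetry in the transducer's state evolution (a block of length $2j$ returns to state $S$, whereas a block of length $2j+1$ ends in state $T$), so each half still requires its own direct transducer computation.
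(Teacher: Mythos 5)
Your proposed pivot sequence $\sigma$ is exactly the trajectory exhibited in the paper's proof (its four phases are your A, B$+$C, D, E), and your verification strategy---running the transducer from state $S$ on the active prefix $u^{(j)}_{2i+2}\cdots u^{(j)}_1$ and observing that an inactive coordinate $\ge 2i+3$ can never win the least-index race against the active candidate $\sigma_{j+1}\le 2i+2$---is sound and matches the paper's (equally terse) phase-by-phase check. All that remains is to record the closed-form sign patterns within each phase, which the paper does by displaying the configurations at the four phase boundaries.
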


\begin{proof}
The algorithm starts at 
$\left(\begin{smallmatrix}
0&1&0&1&\cdots&0&1&~&0&0&\cdots\\
+&+&+&+&\cdots&+&+&~&-&-&\cdots\\
\end{smallmatrix}\right)$, and then proceeds by the rules of pivoting  
\eqref{eq:pivot1} and \eqref{eq:pivot2} in four phases (recall that 
$\textsc{Murty}_{id}$ always pivots on the leftmost~$-$):

\begin{itemize}
\item It pivots on the coordinates $2i+1$, $2i-1$,~\dots,~$1$
and thus reaches the vertex\\
$\left(\begin{smallmatrix}
1&1&1&1&\cdots&1&1&~&1&0&\cdots\\
+&-&+&-&\cdots&+&-&~&+&-&\cdots\\
\end{smallmatrix}\right)$ after $i+1$~iterations.

\item It pivots on the coordinates $2$, $4$, $6$,~\dots,~$2i$, $2i+2$
and thus reaches the vertex\\
$\left(\begin{smallmatrix}
1&0&1&0&\cdots&1&0&~&1&1&\cdots\\
+&+&+&+&\cdots&+&-&~&-&+&\cdots\\
\end{smallmatrix}\right)$
after $i+1$~iterations; 

\item It pivots on the coordinates $2i$, $2i-2$,~\dots,~$2$
and thus reaches the vertex\\
$\left(\begin{smallmatrix}
1&1&1&1&\cdots&1&1&~&1&1&\cdots\\
-&+&-&+&\cdots&-&+&~&-&+&\cdots\\
\end{smallmatrix}\right)$
after $i$~iterations;

\item It pivots on the coordinates $1$, $3$, $5$,~\dots,~$2i+1$
and thus reaches the vertex\\
$\left(\begin{smallmatrix}
0&1&0&1&\cdots&0&1&~&0&1&\cdots\\
+&+&+&+&\cdots&+&+&~&+&+&\cdots\\
\end{smallmatrix}\right) = v^{i+1}$
after $i+1$~iterations.
\end{itemize}

Therefore it takes $4i+3$ iterations to get from $v^i$ to $v^{i+1}$.
\end{proof}

The previous lemma allows us to count the number of iterations from 
$0=v^0$ to $v^{(n-1)/2}$. The following lemma takes care of the remaining
iterations. 

\begin{lemma}
\label{lem:l6}
If Algorithm
\ref{alg:pivot} with $\textsc{r}=\textsc{Murty}_{\id}$ starts
at the vertex $v^{(n-1)/2}$, it gets to the global sink in $(n+1)/2$
iterations.
\end{lemma}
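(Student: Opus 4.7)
My plan is to use the finite state transducer to follow the run of the algorithm and show that, starting from $v^{(n-1)/2}=(0,1,0,1,\dotsc,0,1,0)$, it flips the odd coordinates $n,n-2,n-4,\dotsc,3,1$ in decreasing order, one per iteration, for a total of $(n+1)/2$ iterations.

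First I would compute the configuration of the starting vertex. Reading the bit string $v_{n-1},v_{n-2},\dotsc,v_1,v_n$ into the transducer starting in state~$S$, the first $n-1$ symbols form the alternating sequence $1,0,1,0,\dotsc,1,0$; the transducer then oscillates between states $T$ and~$S$ and emits~$+$ at every step. The final symbol $v_n=0$ is read from state~$S$, yielding~$-$. Hence $\phi(v^{(n-1)/2})$ has a single outgoing edge, at coordinate~$n$, and the first pivot is forced there.

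The heart of the argument is an induction on~$k$ with the following invariant: after $k$ iterations (for $0\le k\le(n-1)/2$), the current vertex has $0$'s exactly at the odd coordinates $1,3,\dotsc,n-2k$, and its configuration carries $-$'s at coordinate $n-2k$ together with the coordinates $n-2k+1,n-2k+3,\dotsc,n-1$ (the latter family being empty for $k=0$). To advance from $k$ to $k+1$ (assuming $k<(n-1)/2$), I observe that the leftmost~$-$ is at coordinate $n-2k$, so $\textsc{Murty}_{\id}$ pivots there; since the next~$0$ to the left sits at coordinate $n-2k-2$ with a single~$1$ in between, this is a pivot of type~\eqref{eq:pivot2} whose sign changes are confined to coordinates $n-2k$, $n-2k-1$, and $n-2k-2$. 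Removing $n-2k$ from the list of $-$'s and inserting $n-2k-1$ and $n-2k-2$ yields precisely the configuration claimed for step~$k+1$.

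Finally, at $k=(n-1)/2$ the current vertex has its sole~$0$ at coordinate~$1$, and flipping it produces the all-$1$'s global sink. Summing one pivot per value of $k=0,1,\dotsc,(n-1)/2$ gives exactly $(n+1)/2$ iterations. The only slightly delicate part is the bookkeeping of which coordinates change sign under rule~\eqref{eq:pivot2}, but this is mechanical once the invariant configuration is identified.
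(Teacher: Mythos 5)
Your proof is correct and takes essentially the same route as the paper: both trace the run explicitly and establish that $\textsc{Murty}_{\id}$ pivots on coordinates $n, n-2, \dotsc, 1$ in order. You merely supply the explicit invariant and the sign bookkeeping that the paper's one-line argument (which defers to the phase analysis of Lemma~\ref{lem:l4}) leaves to the reader.
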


\begin{proof}
The proof is similar to the proof of Lemma \ref{lem:l4}, except that here
only the first phase takes place: if the algorithm starts at the vertex
$\left(\begin{smallmatrix}
0&1&0&1&\cdots&0&1&~&0\\
+&+&+&+&\cdots&+&+&~&-\\
\end{smallmatrix}\right)$,
it pivots on coordinates $n$, $n-2$,~\dots,~$1$ and thus 
reaches the sink after $(n+1)/2$ iterations.
\end{proof}

\begin{proof}[Proof of Theorem~\ref{thm:id}]
By Lemmas~\ref{lem:l4} and \ref{lem:l6}, the overall number of iterations
from the source to the sink is
\[\sum_{i=0}^{(n-3)/2}(4i+3) + \frac{n+1}{2} = \frac{n^2+1}{2}.\qedhere\]
\end{proof}

\section{K-matrix LCP}

In this section we examine \emph{K-LCPs}, the linear complementarity
problems $\LCP(M,q)$ where $M$~is a K-matrix.
\begin{definition}
  A \emph{K-matrix} is a P-matrix so that all 
off-diagonal entries are non-positive.\footnote{Another
    common name in the literature for a K-matrix is \emph{Minkowski
      matrix}.}
\end{definition}

We introduce two simple combinatorial conditions on
unique-sink orientations and prove that one of them implies that
all directed paths from~$0$ are short, and the other one implies
that all directed paths in the orientation are short. We show that
K-LCP orientations satisfy these conditions and conclude that a simple
principal pivoting method with an arbitrary pivot rule solves a K-LCP in
linearly many iterations, starting from any vertex.

First, the \emph{uniform orientation} is the USO in which $v\to v\oplus
i$ if and only if $v_i=0$ (in other words, all edges are oriented ``from
0 to~1''). 

\begin{definition}
A unique-sink orientation~$\Phi$ is \emph{2-up-uniform} if
whenever $U=\{u\oplus I: I\subseteq\{i,j\}\}$  is
such that $u_i=u_j=0$ and $u$~is the source of~$U$, then the orientation
of the subcube~$\subcube{\phi}{U}$ is uniform (see Figure~\ref{fig:2uu}).
\end{definition}

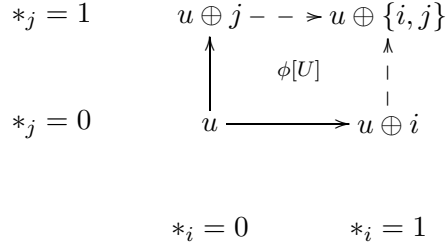
\begin{figure}[htb]
\begin{center}
$\xymatrix{
\ast_j=1&u\oplus j \ar@{-->}[r]\ar@{}[dr]|{\subcube{\phi}{U}}& u\oplus\{i,j\}\\
\ast_j=0&u\ar[u]\ar[r]& u\oplus i\ar@{-->}[u]\\
&\ast_i=0&\ast_i=1}$
\end{center}
\caption{2-up-uniformity: The orientations of the solid edges imply
the orientations of the dashed edges. The top row contains vertices with
$j$th entry equal to~1 and the bottom row vertices with $j$th entry equal
to~0; the left column contains vertices with $i$th entry equal to~0, and
the right column contains vertices with $i$th entry equal to~1.}
\label{fig:2uu}
\end{figure}

A \emph{K-USO} is a unique-sink orientation that arises (as described in
Section~\ref{sec:uso}) from the $\LCP(M,q)$ with some K-matrix~$M$
and some right-hand side~$q$.

\begin{prop}
\label{prop:k-2-up}
Every K-USO is 2-up-uniform.
\end{prop}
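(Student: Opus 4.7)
The plan is to translate the combinatorial 2-up-uniformity condition into an algebraic statement about the basic LCP solution, and then verify it using the principal pivot transform. Let $u$, $i$, $j$ be as in the definition and set $B := B(u)$, so that $i, j \notin B$. The hypothesis that $u$ is the source of $U$ says exactly that both components $s_i$ and $s_j$ of $s := A_B^{-1} q$ are negative (these are the $w$-values of the LCP solution at basis $B$). By the symmetry between $i$ and $j$, it then suffices to show that the $j$-th component $s'_j$ of $s' := A_{B \cup \{i\}}^{-1} q$ is negative, since this is exactly the condition $u \oplus i \to u \oplus \{i,j\}$.

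First I would derive a one-step principal pivot formula. Writing the LCP as $w = q + Mz$ and partitioning rows and columns by $B$ and $N := [n] \setminus B$, the basic solution at $B$ enforces $w_B = 0$ and $z_N = 0$; passing to basis $B \cup \{i\}$ additionally enforces $w_i = 0$, and a routine elimination yields
\[
  s'_j \;=\; s_j \;-\; \frac{\beta_{ji}}{\alpha}\, s_i,
  \qquad
  \alpha := M_{ii} - M_{iB} M_{BB}^{-1} M_{Bi},
  \quad
  \beta_{ji} := M_{ji} - M_{jB} M_{BB}^{-1} M_{Bi}.
\]
Positivity $\alpha > 0$ follows from $M$ being a P-matrix via the Schur determinant identity $\det M_{B \cup \{i\}, B \cup \{i\}} = \alpha \det M_{BB}$, whose factors are both positive.

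The K-matrix hypothesis enters only in the sign analysis of $\beta_{ji}$. Since $M_{BB}$ is a principal submatrix of a K-matrix, it is itself a K-matrix, and so $M_{BB}^{-1} \geq 0$ entrywise by the classical characterization of K-matrices via non-negative inverses. Because $j \neq i$ and both lie outside $B$, all entries of $M_{ji}$, $M_{jB}$, and $M_{Bi}$ are non-positive (Z-matrix property), and multiplying them out gives $M_{jB} M_{BB}^{-1} M_{Bi} \geq 0$, whence $\beta_{ji} \leq M_{ji} \leq 0$. Combined with $s_i < 0$ and $\alpha > 0$, this yields $s'_j \leq s_j < 0$ as required; swapping the roles of $i$ and $j$ then gives $u \oplus j \to u \oplus \{i,j\}$. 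The only real obstacle is careful bookkeeping in the pivot formula and the sign chain; the proof ultimately rests on two classical facts, namely non-negativity of inverses of K-matrices and closure of the P- and K-matrix classes under taking principal submatrices, and no deeper ingredient appears to be needed.
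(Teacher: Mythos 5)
Your proof is correct, but it takes a different route from the one in the paper. The paper's proof first invokes the fact (cited from Stickney and Watson) that every subcube of a K-USO is again a K-USO, which reduces the claim to the case $n=2$; there it simply writes down $M^{-1}$ for a $2\times 2$ K-matrix, observes that it is nonnegative, and concludes that if $0$ is the source (i.e.\ $q<0$) then $-M^{-1}q>0$, so $(1,1)$ is the sink and the $2$-cube is uniform. You instead work in the full dimension and carry out the one-step principal pivot update explicitly: your formula $s'_j=s_j-(\beta_{ji}/\alpha)s_i$ with the Schur complement $\alpha=M_{ii}-M_{iB}M_{BB}^{-1}M_{Bi}$ is the correct update (it follows from the block-inverse of $M_{B\cup\{i\},B\cup\{i\}}$), the positivity of $\alpha$ via the Schur determinant identity is right, and the sign chain $M_{jB}M_{BB}^{-1}M_{Bi}\ge 0$, hence $\beta_{ji}\le M_{ji}\le 0$, hence $s'_j\le s_j<0$, is exactly what is needed; together with the hypothesis on $u$ and the $i\leftrightarrow j$ symmetry this orients all four edges of the subcube. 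In effect you re-derive, in the special case needed here, the content of the subcube lemma that the paper outsources to a citation: your argument is longer but self-contained modulo two classical facts (principal submatrices of P-/K-matrices are P-/K-matrices, and K-matrices have nonnegative inverses), and it yields the slightly stronger monotonicity statement $s'_j\le s_j$ as a byproduct, whereas the paper's proof is shorter once the subcube closure property is granted.
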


\begin{proof}
The essential fact we use is that a subcube of a K-USO
is also a K-USO, as was observed already by Stickney and
Watson~\cite[Lemma~1]{StiWat:Digraph-models}.  Thus it suffices to
prove 2-up-uniformity for K-LCPs of dimension~2. Hence suppose
that $M=\left(\begin{smallmatrix}a&b\\c&d\\\end{smallmatrix}\right)$
is a $2\times2$ K-matrix, so $a,d>0$ and $b,c\le0$.  Then
$M^{-1}=\left(\begin{smallmatrix}d&-b\\-c&a\\\end{smallmatrix}\right)/
{(ad-bc)} \ge0$ and nonsingular.  Now 0~is the source of the orientation
induced by the~$\LCP(M,q)$ if and only if $q<0$.  Hence $-M^{-1}q>0$,
which proves uniformity.
\end{proof}

One particular nice property of a 2-up-uniform orientation
is that all paths from the vertex~0 to the sink~$t$ have the shortest
possible length, equal to the Hamming distance $|t|$ of~$t$ from~$0$.
To prove this statement, we use the following lemma.

\begin{lemma}
\label{lem:crufa-up}
Let $\Phi$ be a 2-up-uniform USO and let $v$ be a vertex such that
$v_i=1$ and $v\oplus i\toPhi v$. If $v_j=0$ and $u = v\oplus j$ is an out-neighbor
of $v$, then $u\oplus i\toPhi u$.
\end{lemma}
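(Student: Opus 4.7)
The plan is to consider the two-dimensional subcube $U$ spanned by the coordinates $\{i,j\}$ and containing $v$, whose four vertices are
\[
v,\quad v\oplus i,\quad u=v\oplus j,\quad u\oplus i = v\oplus i\oplus j,
\]
and to determine its source using the two oriented edges that are already given by the hypotheses. Since $v\oplus i\toPhi v$, the vertex $v$ is not the source of $U$; since $v\toPhi u$, the vertex $u$ is not the source of $U$. By the unique source property of USOs (which follows from Lemma~\ref{lem:ucp}), the source of $U$ must therefore be either $v\oplus i$ or $u\oplus i$.

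In the first case, note that $(v\oplus i)_i = 0$ and $(v\oplus i)_j = v_j = 0$, so the hypothesis of 2-up-uniformity applies to $U$ with base vertex $v\oplus i$. Consequently $\subcube{\phi}{U}$ is the uniform orientation of the 2-cube, meaning every edge of $U$ is oriented from its $0$-endpoint to its $1$-endpoint. In particular, since $u_i=1$ and $(u\oplus i)_i = 0$, we obtain $u\oplus i\toPhi u$. In the second case, if $u\oplus i$ is the source of $U$, then by definition both of its incident edges in $U$ point outward, so in particular $u\oplus i\toPhi u$.

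Thus in either case we conclude $u\oplus i\toPhi u$, which is the desired statement. The only subtle point—more a matter of bookkeeping than a real obstacle—is checking that the bit patterns at coordinates $i$ and $j$ line up correctly so that 2-up-uniformity applies in the first case, and that the two remaining candidates for the source are indeed the only ones after eliminating $v$ and $u$. No further case distinction is needed, since both surviving candidates independently force the conclusion.
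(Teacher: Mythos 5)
Your proof is correct and takes essentially the same approach as the paper: both examine the 2-dimensional subcube on coordinates $\{i,j\}$ and apply 2-up-uniformity at the vertex $v\oplus i$. The only difference is presentational—you argue directly by locating the source of the subcube, while the paper assumes $u\to u\oplus i$ for contradiction and forces the fourth edge via the sink—but the underlying argument is identical.
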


\begin{proof}
Suppose for the sake of contradiction that $v_i=1$, $v_j=0$ and
$v\oplus i\to v$, and that $u = v\oplus j$, $v\to u$ and $u\to
u\oplus i$.  Let $U:=\{u\oplus I:I\subseteq\{i,j\}\}$ and consider
the subcube~$\subcube{\phi}{U}$ (see Figure~\ref{fig:uni-proof1}, left): we
observe that $v\oplus i\to u\oplus i$ because otherwise the subcube would
not contain a sink. But then it follows from 2-up-uniformity of~$\Phi$
that $u\oplus i\to u$, a contradiction.
\end{proof}

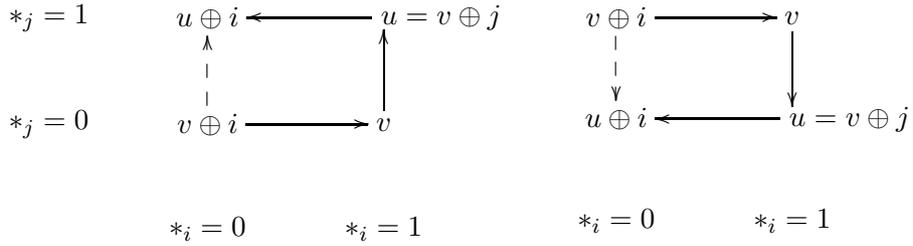
\begin{figure}[htb]
\begin{center}
$\xymatrix{
\ast_j=1&u\oplus i& *[r]{\ u=v\oplus j}\ar[l]\\
\ast_j=0&v\oplus i\ar[r]\ar@{-->}[u]& v\ar[u]\\
&\ast_i=0&\ast_i=1}$
\qquad 
$\xymatrix{
v\oplus i\ar[r]\ar@{-->}[d]& v\ar[d]\\
u\oplus i&*[r]{\ u=v\oplus j}\ar[l]\\
\ast_i=0&\ast_i=1}$
\end{center}
\caption{Illustrating the proof of Lemma~\ref{lem:crufa-up} (left)
and Lemma~\ref{lem:crufa} (right).}
\label{fig:uni-proof1}
\end{figure}

\begin{prop}
\label{prop:2uu-short}
Let $\Phi$ be a 2-up-uniform USO and let
$t$~be its sink. Then any directed path from~$0$ to~$t$ has length~$|t|$.
\end{prop}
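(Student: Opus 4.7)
The plan is to show, by induction along any directed path from $0$, that the path is strictly monotone in Hamming weight: every step flips some coordinate from $0$ to $1$ and no coordinate is ever flipped back. Since the Hamming weight starts at $0$ and must end at $|t|$, this immediately forces the length to be $|t|$.

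The key invariant I would carry along the path is the following: at every vertex $v$ visited by the path, every coordinate $i$ with $v_i = 1$ has the edge in coordinate $i$ oriented into $v$, i.e.\ $v \oplus i \toPhi v$. Granting the invariant, at a non-sink vertex $v$ any outgoing edge must lie in some coordinate $j$ with $v_j = 0$ (else the invariant would be violated), so the next step flips $v_j$ from $0$ to $1$, strictly increasing the number of ones. Iterating, after $k$ steps from $0$ we are at a vertex of Hamming weight $k$, and the path can reach $t$ (of weight $|t|$) only after exactly $|t|$ steps.

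To establish the invariant I would induct on the position along the path. The base case is $v = 0$, where there are no $1$-coordinates, so the invariant holds vacuously. For the inductive step, suppose the invariant holds at $v$ and the path moves to $u = v \oplus j$ via $v \toPhi u$; by the invariant we have $v_j = 0$, so $u_j = 1$ and the edge in coordinate $j$ at $u$ is incoming from $v$. For any other coordinate $i$ with $u_i = 1$, we have $v_i = 1$ as well, so the inductive hypothesis gives $v \oplus i \toPhi v$. This is exactly the setup of Lemma~\ref{lem:crufa-up}, which yields $u \oplus i \toPhi u$, completing the inductive step.

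The main obstacle is really just spotting that Lemma~\ref{lem:crufa-up} is precisely the propagation statement needed to maintain the invariant under a single pivot; once this is noticed, the rest is bookkeeping. As a sanity check, Lemma~\ref{lem:hamming} guarantees that at least one directed path of length $|t|$ from $0$ to $t$ exists, so the proposition upgrades this to the statement that \emph{every} such directed path has the same (minimum possible) length.
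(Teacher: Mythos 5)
Your proposal is correct and follows essentially the same route as the paper: the paper's proof likewise establishes by induction along the path, via Lemma~\ref{lem:crufa-up}, that every vertex $u$ on a directed path from~$0$ with $u_i=1$ satisfies $u\oplus i\to u$, and concludes that all edges traversed go from~$0$ to~$1$. Your write-up just spells out the induction and the base case more explicitly than the paper does.
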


\begin{proof}
It follows from Lemma~\ref{lem:crufa-up} by induction that if $u$~is a vertex on any
directed path starting from~$0$, and if $u_i=1$, then $u\oplus i\to u$.
Hence all edges on a directed path from~$0$ to~$t$ are oriented from~0
to~1, and therefore the length of a directed path from~$0$ to~$t$
is~$|t|$.
\end{proof}

As a direct consequence of Propositions \ref{prop:k-2-up}
and~\ref{prop:2uu-short} we get the following theorem.

\begin{theorem}
Every directed path from~0 to the sink~$t$ of a K-USO has length
${|t|\le n}$.
\qed
\end{theorem}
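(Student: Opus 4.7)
The plan is very short, because the theorem is essentially a direct corollary of the two propositions already established. First I would invoke Proposition~\ref{prop:k-2-up} to assert that the given K-USO $\Phi$ is 2-up-uniform. Then I would apply Proposition~\ref{prop:2uu-short} to $\Phi$ to conclude that every directed path from the vertex $0$ to the sink $t$ has length exactly $|t|$, where $|t|$ denotes the Hamming weight of $t$ (i.e., its distance from $0$ in the cube graph).

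The final inequality $|t| \le n$ is immediate from $t \in \{0,1\}^n$, since the Hamming weight of any bit vector of length $n$ is at most $n$. There is no obstacle here beyond stringing the two propositions together and observing this trivial bound; the substantive content was already packed into Proposition~\ref{prop:k-2-up} (algebraic input from K-matrices implying combinatorial 2-up-uniformity) and Proposition~\ref{prop:2uu-short} (2-up-uniformity forcing every edge on a root-to-sink directed path to flip a coordinate from $0$ to $1$, via Lemma~\ref{lem:crufa-up}). Accordingly, the proof I would write is a two-sentence composition of these two propositions, concluded by the observation about the Hamming weight.
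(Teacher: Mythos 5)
Your proposal is correct and matches the paper exactly: the theorem is stated there as a direct consequence of Propositions~\ref{prop:k-2-up} and~\ref{prop:2uu-short}, with the bound $|t|\le n$ being the trivial observation about Hamming weight that you make. Nothing further is needed.
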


What is the actual strength of this theorem? We know that from any
vertex of a unique-sink orientation there \emph{exists} a path to the sink
of length at most~$n$ (Lemma \ref{lem:hamming}). It has also long been
known how to find such a path from the vertex~0 in the case of a
K-USO~\cite{Cha:A-special,Sai:A-note}. The
novelty is that \emph{any} directed path starting from~0 reaches the
sink after the least possible number of iterations. Hence a simple
principal pivoting method with an \emph{arbitrary} pivot
rule finds the solution to a K-LCP in at most $n$~iterations.

In view of this result, it is natural to ask whether it also holds if
the path starts in some vertex different from~0. Imposing
2-up-uniformity is insufficient, as we can observe by looking at the
Morris orientations. If we swap 0's and 1's in any Morris orientation,
we get a 2-up-uniform orientation. Hence all
directed paths from~0 to the sink are short; indeed, 0~is the sink of
the orientation after swapping, so all such paths have length~0.
Nevertheless, long directed paths (and even directed cycles) do exist in 
this orientation. Therefore we introduce the following stronger
combinatorial property of USOs, which turns out to be satisfied by 
K-USOs as well.

\begin{definition}
A unique-sink orientation $\Phi$ is \emph{2-uniform} if it
is 2-up-uniform and the orientation~$\Phi^{([n])}$ constructed
from~$\Phi$ by reversing all edges is also 2-up-uniform.
\end{definition}

\begin{prop}
Every K-USO is 2-uniform.
\label{prop:Kuni}
\end{prop}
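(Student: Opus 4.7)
Proposition~\ref{prop:k-2-up} already gives that $\Phi$ is 2-up-uniform, so the only remaining task is to show that the edge-reversed orientation $\Phi^{([n])}$ is 2-up-uniform as well. Unwinding the definition, this is equivalent to the following ``dual'' statement about $\Phi$ itself: for every 2-subcube $U=\{u\oplus I:I\subseteq\{i,j\}\}$ with $u_i=u_j=0$ such that $u$ is the \emph{sink} of $\subcube{\phi}{U}$, the subcube $\subcube{\phi}{U}$ is oriented \emph{anti-uniformly} in $\Phi$, meaning that $u\oplus\{i,j\}$ is its source and each of the two remaining vertices has its interior edge pointing into $u$. The reason this reformulation is equivalent is that restricting to a subcube commutes with global edge reversal, and a uniform orientation of the reversed 2-subcube is precisely an anti-uniform orientation of the original.

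I would then reduce to dimension 2 exactly as in the proof of Proposition~\ref{prop:k-2-up}, using that subcubes of K-USOs are K-USOs \cite[Lemma~1]{StiWat:Digraph-models}. So suppose $\LCP(M,q)$ is 2-dimensional with $M=\left(\begin{smallmatrix}a&b\\c&d\end{smallmatrix}\right)$, $a,d>0$, $b,c\le 0$, and that $(0,0)$ is the sink of the induced USO. At $(0,0)$ we have $A_B=I$, so~\eqref{eq:usodef} forces $q>0$ componentwise. At the antipode $(1,1)$, $A_B=-M$, so the signs there are those of $-M^{-1}q$. Now $M^{-1}=\tfrac{1}{ad-bc}\left(\begin{smallmatrix}d&-b\\-c&a\end{smallmatrix}\right)$ is entrywise nonnegative (the determinant is positive as a principal minor of the P-matrix $M$, and $-b,-c\ge 0$ by K-matrix-ness), so $M^{-1}q>0$ and hence $-M^{-1}q<0$. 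Thus both edges at $(1,1)$ leave it; the two side edges are then pinned by the unique-sink property of $\subcube{\phi}{U}$ to point into $(0,0)$, giving the required anti-uniform pattern.

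The only conceptually nontrivial piece is the reformulation of the reverse 2-up-uniformity condition into the anti-uniformity statement at a sink 2-subcube; once that is in place, the 2-dimensional computation is the mirror image of the one in Proposition~\ref{prop:k-2-up} and relies on the same algebraic input, namely that the inverse of a K-matrix has only nonnegative entries. I do not foresee any genuine obstacle beyond being careful with the reformulation, because closure of K-USOs under subcubes absorbs all higher-dimensional complications into the single 2D calculation above.
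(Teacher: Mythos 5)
Your proof is correct, but it takes a different (and longer) route than the paper. The paper's own proof is a one-liner: by the definition \eqref{eq:usodef}, reversing every edge of the USO induced by $\LCP(M,q)$ yields exactly the USO induced by $\LCP(M,-q)$; since $M$ is still a K-matrix, $\Phi^{([n])}$ is itself a K-USO and Proposition~\ref{prop:k-2-up} applies to it verbatim --- no new computation is needed. You instead unwind what 2-up-uniformity of $\Phi^{([n])}$ means for $\Phi$ (anti-uniformity of every 2-subcube whose all-zero corner is its sink) and redo the two-dimensional calculation in mirror image: $u=(0,0)$ a sink forces $q>0$, then $M^{-1}\ge 0$ with positive diagonal gives $-M^{-1}q<0$, so $(1,1)$ is the source and the side edges are pinned. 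Both arguments ultimately rest on the same two ingredients (closure of K-USOs under subcubes and nonnegativity of the inverse of a K-matrix), and your reformulation and sign-chasing are all correct; what the paper's symmetry argument buys is that it reuses Proposition~\ref{prop:k-2-up} as a black box and avoids duplicating the algebra, while your version is more self-contained and makes explicit what the reversed condition says about $\Phi$ directly. If you keep your version, it is worth at least noting the $q\mapsto -q$ symmetry, since it is the structural reason the mirror-image computation was guaranteed to go through.
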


\begin{proof}
If $\Phi$ is induced by the $\LCP(M,q)$ for some K-matrix~$M$, then
$\Phi^{([n])}$~is induced by the $\LCP(M,-q)$, hence it is also a K-USO and
therefore it is 2-up-uniform.
\end{proof}

Surprisingly, the simple property of being 2-uniform enforces
a lot of structure on the whole orientation, as we show next.

\begin{theorem}
\label{thm:2lu}
The length of every directed path in a 2-uniform USO is at
most~$2n$.
\end{theorem}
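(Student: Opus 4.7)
The plan is to show that along any directed path in a 2-uniform USO, each coordinate is flipped at most twice, which immediately yields the bound $2n$. Since flips in a fixed coordinate must alternate between up (from 0 to 1) and down (from 1 to 0), it suffices to prove: after an up-flip in coordinate~$i$, no subsequent down-flip in coordinate~$i$ can occur on the same directed path.

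The first step is to derive a dual of Lemma~\ref{lem:crufa-up} by applying that lemma to the reversed orientation $\Phi^{([n])}$, which is also 2-up-uniform by the definition of 2-uniformity, and then translating back to $\Phi$. The resulting statement reads: if $u\to v$ is a down-flip in some coordinate~$j$ (i.e., $u_j=1$ and $v=u\oplus j$), and if $v_i=1$ with $v\to v\oplus i$, then $u\to u\oplus i$. Taking the contrapositive: if $u_i=1$ and the $i$-edge at $u$ is incoming (so the down-flip in~$i$ is not available at $u$), then after any down-flip $u\to v=u\oplus j$ with $j\neq i$, we still have $v_i=1$ and the $i$-edge at $v$ incoming.

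Now the main induction is immediate. Suppose the path performs an up-flip $v\to v\oplus i$. Then $(v\oplus i)_i=1$ and the $i$-edge at $v\oplus i$ is incoming, so no down-flip in coordinate~$i$ is available there. Every subsequent step on the path must therefore flip some $j\neq i$. If that step is an up-flip in~$j$, Lemma~\ref{lem:crufa-up} preserves the property that the $i$-coordinate equals~1 and the $i$-edge is incoming; if it is a down-flip in~$j$, the contrapositive of the dual statement above does the same. Inductively, no down-flip in coordinate~$i$ is ever available on the remainder of the path. Consequently, each coordinate is flipped in one of four patterns along a directed path---empty, one up, one down, or one down followed by one up---so the path has length at most $2n$.

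The main subtlety is deriving the dual preservation rule correctly from 2-up-uniformity of $\Phi^{([n])}$ and verifying that Lemma~\ref{lem:crufa-up} and its dual together cover both possible types of subsequent pivot step; once both rules are in place, the remaining induction is routine.
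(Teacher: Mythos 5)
Your proof is correct and takes essentially the same route as the paper: the key invariant---once a coordinate shows $\binom{1}{+}$ it keeps showing $\binom{1}{+}$ along the rest of the path---is exactly the content of Lemma~\ref{lem:crufa}, whose $v_j=1$ case you simply re-derive by dualizing Lemma~\ref{lem:crufa-up} through $\Phi^{([n])}$ rather than citing the lemma directly. The resulting count of at most two flips per coordinate, hence path length at most $2n$, matches the paper's argument.
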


Before we prove the theorem, let us point out an important corollary.

\begin{cor}\label{cor:2n}
Algorithm \ref{alg:pivot} with an arbitrary pivot
rule~\textsc{r}, starting at an arbitrary vertex of the corresponding USO,
finds the solution to  an $n$-dimensional K-LCP in at most $2n$~iterations.
\qed
\end{cor}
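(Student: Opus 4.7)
The plan is to show that in a 2-uniform USO, each coordinate is pivoted at most twice along any directed path; since each pivot flips exactly one coordinate, the total length of a directed path is then at most $2n$.

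The key ingredient is a strengthening of Lemma~\ref{lem:crufa-up}: in a 2-uniform USO, whenever $v_i=1$, $v\oplus i\to v$, and $u=v\oplus j$ is an out-neighbor of $v$ with $j\ne i$, one has $u\oplus i\to u$, \emph{without} any assumption on $v_j$. I would prove this by a case analysis on the USO induced on the 2-subcube spanned by coordinates $i,j$: the two prescribed edges $v\oplus i\to v$ and $v\to u$ admit exactly three extensions to a USO of this 2-subcube (the fourth combination is a 4-cycle and has no sink), and in only one of these does the $i$-edge at $u$ point outward, namely the extension having $v\oplus i$ as source and $u\oplus i$ as sink. In this offending extension, the source sits at coordinate configuration $(0,v_j)$ and the sink at $(0,1-v_j)$. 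If $v_j=0$, the source lies at $(0,0)$, violating 2-up-uniformity of $\Phi$; this is the case already handled by Lemma~\ref{lem:crufa-up}. If $v_j=1$, the sink lies at $(0,0)$, violating 2-up-uniformity of $\Phi^{([n])}$, which is available precisely because $\Phi$ is 2-uniform.

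With the strengthened lemma in hand, consider any directed path $v_0\to v_1\to\cdots\to v_k$. If coordinate $i$ is pivoted from 0 to 1 at step $t$, then $v_t$ satisfies $(v_t)_i=1$ and $v_t\oplus i\to v_t$. Applying the strengthened lemma inductively to each subsequent step propagates both properties to every $v_s$ with $s\ge t$; in particular, the $i$-edge at each such $v_s$ is incoming, and no further pivot on coordinate $i$ is possible. Dually, applying the strengthened lemma inside $\Phi^{([n])}$ (also 2-uniform) and translating back to $\Phi$ yields: if $v_i=1$ and $v\to v\oplus i$, then every in-neighbor $u=v\oplus j$ of $v$ with $j\ne i$ satisfies $u_i=1$ and $u\to u\oplus i$. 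Propagated backwards along the path from a 1-to-0 pivot on $i$ at step $t_0$, this shows that no pivot on coordinate $i$ has occurred before step $t_0$. Combining the two, coordinate $i$ is pivoted at most twice along the path, and in the case of two pivots the order must be ``1-to-0 first, 0-to-1 second.'' Summing over the $n$ coordinates yields $k\le 2n$.

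The main difficulty is the new sub-case $v_j=1$ of the strengthened lemma: here 2-up-uniformity of $\Phi$ alone no longer excludes the bad 2-subcube extension, and one really needs the mirror condition on $\Phi^{([n])}$. This is the precise spot where the 2-uniformity hypothesis (over mere 2-up-uniformity) is indispensable, consistent with the observation preceding the definition of 2-uniformity that the swapped Morris orientations are 2-up-uniform yet contain arbitrarily long directed paths and even directed cycles.
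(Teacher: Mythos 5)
Your proposal is correct and follows essentially the same route as the paper: your ``strengthened lemma'' is precisely Lemma~\ref{lem:crufa} (which the paper proves exactly as you do, reducing the case $v_j=0$ to Lemma~\ref{lem:crufa-up} via 2-up-uniformity of~$\Phi$ and handling $v_j=1$ via 2-up-uniformity of~$\Phi^{([n])}$), and your at-most-two-pivots-per-coordinate count is the paper's proof of Theorem~\ref{thm:2lu}, combined with Proposition~\ref{prop:Kuni}. The only difference is that your backward (dual) propagation step is redundant: the forward propagation alone shows that after a 0-to-1 pivot on coordinate~$i$ no further pivot on~$i$ can occur, which already limits coordinate~$i$ to at most one pivot of each type.
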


As a consequence, we get a result for a larger class of LCPs. If
$M$~is a \emph{principal pivotal transform} \cite[Section
2.3]{CotPanSto:LCP} of a K-matrix~$M'$, then $M$~is a P-matrix
\cite[Theorem 4.1.3]{CotPanSto:LCP}, and the USO $\Phi$ induced by
the $\LCP(M,q)$ is isomorphic to the USO~$\Phi'$ induced by the
$\LCP(M',q')$ for suitable $q'$.  It follows that
Corollary~\ref{cor:2n} also applies to the $\LCP(M,q)$, even though
$M$~is not necessarily a K-matrix itself.

Our proof of Theorem~\ref{thm:2lu} is based on the following crucial fact,
which extends Lemma~\ref{lem:crufa-up}.
Informally, it asserts that once we have a
$\binom{1}{+}$ in some coordinate, we will always have a
$\binom{1}{+}$.

\begin{lemma}
\label{lem:crufa}
Let $\Phi$ be a 2-uniform USO and let $v$ be a vertex such that
$v_i=1$ and $v\oplus i\toPhi v$. If $u = v\oplus j$ is an out-neighbor
of $v$, then $u\oplus i\toPhi u$.
\end{lemma}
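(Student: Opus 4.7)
The plan is to work within the $2$-dimensional subcube $U := \{v \oplus I : I \subseteq \{i,j\}\}$, whose four vertices are $v$, $w := v \oplus i$, $u = v \oplus j$, and $x := u \oplus i = v\oplus\{i,j\}$. The hypotheses pin down two of the four edges of $U$: $w \to v$ and $v \to u$. The edge $\{u,x\}$ (direction $i$) is the one whose orientation we want to determine, and the conclusion $u\oplus i \to u$ is precisely $x \to u$.

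I would argue by contradiction. Assume $u \to x$. Then within $U$, the vertex $u$ has one incoming edge (from $v$) and one outgoing edge (to $x$), and the vertex $v$ is likewise neither source nor sink; since $w \to v$ rules out $w$ as the sink, the unique sink of $U$ must be $x$. Dually, the unique source must be $w$, so the remaining edge is oriented $w \to x$. Thus the assumption $u \to x$ forces the entire square $U$ to be oriented with source $w$ and sink $x$.

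Now I split on $v_j$ and apply $2$-uniformity in the appropriate direction. If $v_j = 0$, then $w_i = w_j = 0$, so $w$ is a source of $U$ whose two relevant coordinates both vanish; by $2$-up-uniformity of $\Phi$, the orientation of $U$ must be uniform. But uniform orientation of $U$ has $x \to u$ (since $x_i = 0$, $u_i = 1$), contradicting $u \to x$. If instead $v_j = 1$, then $x_i = x_j = 0$, and in $\Phi^{([n])}$ (which is $2$-up-uniform by the definition of $2$-uniformity) the vertex $x$ is the source of $U$. So $U$ must be uniformly oriented in $\Phi^{([n])}$; but this uniform orientation has $w \to v$ in $\Phi^{([n])}$ (since $w_i = 0$), i.e.\ $v \to w$ in $\Phi$, contradicting the hypothesis $w \to v$.

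The main obstacle is that Lemma~\ref{lem:crufa-up} by itself covers only the subcase $v_j = 0$, exactly the subcase in which the ``bad'' source $w$ of $U$ has its two relevant coordinates equal to $0$. Handling the subcase $v_j = 1$ is what requires the full strength of $2$-uniformity: one has to notice that in this subcase it is the sink $x$ of $U$, rather than the source $w$, whose relevant coordinates both vanish, and therefore invoke $2$-up-uniformity of the reversed orientation $\Phi^{([n])}$ instead of that of $\Phi$.
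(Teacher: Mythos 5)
Your proof is correct and follows essentially the same route as the paper's: assume $u\to u\oplus i$ for contradiction, determine the orientation of the $2$-face $U$ (forcing $u\oplus i$ to be its sink), and then invoke $2$-up-uniformity of $\Phi$ when $v_j=0$ and of $\Phi^{([n])}$ when $v_j=1$. The only cosmetic difference is that you re-derive the case $v_j=0$ from scratch instead of simply citing Lemma~\ref{lem:crufa-up}, whose proof is exactly that argument.
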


\begin{proof}
If $v_j=0$, the claim follows from Lemma~\ref{lem:crufa-up}.  So let us
suppose that we have $v_i=1$, $v_j=1$ and $v\oplus i\to v$, and that
$u = v\oplus j$, $v\to u$ and $u\to u\oplus i$.  Let $U:=\{u\oplus
I:I\subseteq\{i,j\}\}$.  We observe that $v\oplus i\to u\oplus i$
so that the subcube~$\subcube{\phi}{U}$ contains a
sink. But then it follows from 2-up-uniformity of~$\Phi^{([n])}$ that $v\to v\oplus i$,
a contradiction (see Figure~\ref{fig:uni-proof1}, right).
\end{proof}

\begin{proof}[Proof of Theorem~\ref{thm:2lu}]
  Let $i$ be a fixed coordinate. Suppose in the considered directed
  path there is an edge $v\oplus i\to v$ such that $v_i=1$. As a
  consequence of Lemma~\ref{lem:crufa}, all vertices $u$ on a directed
  path starting at $v$ satisfy $u_i=1$.  It follows that in any
  directed path no more than two edges appear of the form $v\oplus
  i\to v$ for any fixed~$i$: possibly one with $v_i=0$ and one with
  $v_i=1$. This fact then implies that the length of any directed path
  is at most~$2n$.
\end{proof}

The strength of 2-uniformity can perhaps be explained by its being
equivalent to what we call \emph{local uniformity}. A USO~$\Phi$ is
\emph{locally up-uniform} if for every
$U=\{u\oplus I: I\subseteq
J\}$ such that $u_J=0$ and $u\toPhi u\oplus j$ for 
all $j\in J$, the orientation of the subcube~$\subcube{\phi}{U}$
is uniform. A USO~$\Phi$ is \emph{locally uniform}
if both $\Phi$ and $\Phi^{([n])}$ are locally up-uniform.

\begin{prop}
Let $\Phi$ be a USO.
\begin{itemize}
\item[(i)] $\Phi$ is locally up-uniform if and only if it is 2-up-uniform.
\item[(ii)] $\Phi$ is locally uniform if and only if it is 2-uniform.
\end{itemize}
\label{prop:locuni}
\end{prop}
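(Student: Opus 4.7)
The strategy is to prove (i) by induction on $|J|$; part~(ii) then follows at once from (i), since both ``locally uniform'' and ``2-uniform'' are defined as the conjunction of the corresponding up-version for $\Phi$ with the same condition for $\Phi^{([n])}$.

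For (i), one direction is immediate: local up-uniformity specialized to $|J|=2$ is precisely 2-up-uniformity. The substantive direction is that 2-up-uniformity implies local up-uniformity. Fix $\Phi$ 2-up-uniform and a subcube $U=\{u\oplus I:I\subseteq J\}$ with $u_J=0$ and $u\toPhi u\oplus j$ for every $j\in J$; I want $\subcube{\phi}{U}$ to be uniform. Induct on $|J|$; the cases $|J|\le 2$ are vacuous or are exactly the 2-up-uniformity hypothesis. For $|J|=k\ge 3$, fix any $j\in J$ and set $J':=J\setminus\{j\}$, so that $U$ splits into the facets $U_0=\{u\oplus I:I\subseteq J'\}$ and $U_1=\{u\oplus j\oplus I:I\subseteq J'\}$.

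First I would show both facets are uniformly oriented. The vertex $u$ is still the source of $U_0$, so the inductive hypothesis applied in dimension $k-1$ makes $\subcube{\phi}{U_0}$ uniform. To apply the hypothesis to $U_1$, I need $u\oplus j$ to be its source, i.e.\ $u\oplus j\toPhi u\oplus\{i,j\}$ for every $i\in J'$. For each such $i$, the 2-subcube $\{u\oplus K:K\subseteq\{i,j\}\}$ has $u$ as its source (since $u_i=u_j=0$ and $u\to u\oplus i$, $u\to u\oplus j$), so 2-up-uniformity makes it uniform, in particular giving $u\oplus j\to u\oplus\{i,j\}$ as required.

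Having $\subcube{\phi}{U_0}$ and $\subcube{\phi}{U_1}$ uniform, it remains to show every $j$-edge in $U$ points from $U_0$ to $U_1$, that is, $u\oplus I\toPhi u\oplus I\oplus j$ for every $I\subseteq J'$. I prove this by a secondary induction on $|I|$, with base case $I=\emptyset$ given by hypothesis. For $I\ne\emptyset$, pick $i\in I$ and set $I':=I\setminus\{i\}$; then $(u\oplus I')_i=(u\oplus I')_j=0$, by the inner induction hypothesis $u\oplus I'\to u\oplus I'\oplus j$, and by uniformity of $U_0$ (from the outer induction) $u\oplus I'\to u\oplus I'\oplus i=u\oplus I$. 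Hence $u\oplus I'$ is the source of the 2-subcube on coordinates $\{i,j\}$, so 2-up-uniformity makes that 2-subcube uniform, which yields $u\oplus I\to u\oplus I\oplus j$. This completes both inductions and proves (i). The main technical point is organizing the nested induction so that each invocation of 2-up-uniformity is correctly anchored: in each case the verification that the chosen vertex is the source of the relevant 2-cube relies precisely on the piece of uniformity already established by an earlier inductive step, either outer ($U_0$ uniform) or inner (the $j$-edge at $u\oplus I'$).
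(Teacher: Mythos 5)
Your proof is correct and follows exactly the route the paper indicates (induction on the dimension of the subcube $U$, with the trivial direction and the reduction of (ii) to (i) handled as stated); the paper merely sketches this, whereas you supply the details, including the inner induction needed to orient the cross edges.
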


\begin{proof}
  (ii) is easily implied by (i); and one implication of (i) is trivial.
  The remaining implication is not difficult to prove by induction on the
  dimension of the considered subcube.
\end{proof}

Local up-uniformity allows for a slight improvement in solving
a K-LCP. When following a directed path starting in~0, instead of
traversing one edge at a time we can perform a ``greedy'' principal
pivot: jump straight to the sink of the subcube induced by outgoing
edges. Unlike the general P-matrix case, it cannot lead to cycling. Due
to local up-uniformity, it will never increase the number of iterations
and may sometimes reduce it.

\paragraph{Locally uniform USOs vs.\ K-USOs.} We have shown
(Propositions~\ref{prop:Kuni} and \ref{prop:locuni}) that every K-USO
is 2-uniform (equivalently, locally uniform). It is natural to ask
whether the converse is also true.

The answer to this question is negative. It can be shown that the number
of $n$-dimensional locally uniform USOs is asymptotically much larger
than the number of K-USOs. A lower bound of $2^{\binom{n-1}{\lfloor
(n-1)/2\rfloor}}$ on the number of locally uniform USOs can be derived
by an adaptation of Develin's construction~\cite{Dev:LP-orientations}
of many orientations satisfying the Holt-Klee condition. An upper bound
of~$2^{O(n^3)}$ on the number of P-USOs (and thus also K-USOs) follows
from Develin's proof of his upper bound on the number of LP-realizable
cube orientations.

At present, however, we do not know whether locally uniform P-USOs form
a proper superclass of the class of K-USOs.

\end{document}